\g@addto@macro\th@remark{\thm@headpunct{}}
\newtheorem{theor}{\hspace{1cm}{\sc Theorem}}[section]
\newtheorem{utver}[theor]{\hspace{1cm}{\sc Proposition}}
\newtheorem{lemma}[theor]{\hspace{1cm}{\sc Lemma}}
\newtheorem*{utver*}{\hspace{1cm}{\sc Proposition}}
\theoremstyle{definition}
\newtheorem{defin}[theor]{\hspace{1cm}{\sc Definition}}
\newtheorem{exa}[theor]{\hspace{1cm}{\sc Example}}
\DeclareMathOperator{\supp}{supp}
\DeclareMathOperator{\cod}{codim}
\DeclareMathOperator{\rk}{rk}
\DeclareMathOperator{\im}{Im}
\DeclareMathOperator{\conv}{conv}
\DeclareMathOperator{\mv}{MV}
\DeclareMathOperator{\inter}{int}
\numberwithin{equation}{section}
\begin{document}

\title{Effective membership problem and systems of polynomial equations with multiple roots.
}


\author{I. Nikitin 
}

\footnotetext{
              University of Toronto, Department of Mathematics; \\
              \email{vanya.nikitin@mail.utoronto.ca}           
}


\maketitle

\begin{abstract}
For a tuple of $k+1$ convex polytopes $(A, B,\ldots, B)$ we solve the so-called effective membership problem, i.e. for a tuple $f=(f_1,\ldots, f_k)$ of polynomials satisfying some certain properties of generality and having Newton polytope $B$ we provide a method to compute $\mathbb{C}^A\cap I$, where $I$ is an ideal in the ring of Laurent polynomials generated by $f$. We connect this topic to the study of mixed discriminants and multiple solutions of systems of polynomial equations.
\end{abstract}

\section{\bf Introduction.}
\label{intro}

\subsection{Nullstellensatz and membership problem.}

\indent

\indent

Classical Hilbert Nullstellensatz says that the system of polynomial equations $f_i=0,\ i=1,\ldots, k$ over algebraically closed field (say $\mathbb{C}$) has no solutions if and only if the ideal generated by the tuple $(f_1,\ldots, f_k)$ contains unity, that is, there exists a decomposition$\colon$
\begin{equation}\label{eq1}
1=c_1 f_1+\ldots+c_k f_k,\ c_i\in\mathbb{C}[\bf{x}].
\end{equation}

 To find $c_i$ from \ref{eq1} we would like to have some upper bound imposed on the degrees of $c_i$. The so-called {\it{Effective Nullstellensatz}} is a branch of results dedicated to the problem of finding the multiples in \ref{eq1} with the least possible degree for a given type of systems $f_1,\ldots, f_k$. For example, we have the well known result \cite{B} of W. Dale Brownawell published in 1987$\colon$

\begin{theor}[W. Dale Brownawell, 1987]

Let the polynomials $P_1,\ldots, P_m\in\mathbb{C}[\bf{x}]$ with $\deg P_i\leq D$ have no common zero in $\mathbb{C}^n$. Then there are polynomials $A_1,\ldots, A_m\in\mathbb{C}[\bf{x}]$ with

$$
\deg A_i\leq\mu n D^\mu+\mu D,
$$
where $\mu=\min\{m, n\}$, such that 
$$
1=A_1 P_1+\ldots+A_m P_m.
$$
\end{theor}

In 1988, one year later in \cite{K} Kollar obtained a new bound for polynomials of degree at least $3$. The estimate given in the paper is sharp in the following meaning$\colon$If we have a system $f_i=0,\ d_i=\deg f_i\geq3$ with no common zeroes then there exists a tuple of polynomials $c_1,\ldots, c_k$ such that $1=f_1 c_1+\ldots+f_k c_k$, $\deg f_i c_i\leq N(d_1,\ldots, d_k, k, n)$, where $N(d_1,\ldots, d_k, k, n)$ is a function depending only on $d_i, k, n$. Moreover there exists a system $f^0_1,\ldots, f^0_k$ with no common solutions such that there is NO set of multiples $c_i$ with the property $1=c_1 f^0_1+\ldots+c_k f^0_k$ such that $\deg f_i c_i<N(d_1,\ldots, d_k, k, n)$ for at least one of $i$. Other results on the Effective Nullstellensatz were obtained in \cite{SS}, \cite{S} and \cite{J}, for example, and recently \cite{AJ} which enhances theorem \ref{Tui} below.

The estimate can be significantly improved if we consider the set of sufficiently generic systems. The first step toward this is to consider the ring of Laurent polynomials $\mathbb{C}[\bf{x},\bf{x}^{-1}]$ instead of the ordinary ring of polynomials. For example we have the following result from \cite{CE}$\colon$

\begin{theor}[Canny, J., and I. Z. Emiris., 2000]

Let $A_0,\ldots, A_n\subset\mathbb{Z}^n$ be convex polytopes. Let $A=A_0+\ldots+A_n$ be of full dimension $n$. Then there exists a Zariski open subset $U$ of the space $\mathbb{C}^{A_0}\oplus\ldots\oplus\mathbb{C}^{A_n}$ such that for tuple of polynomials $f=(f_0,\ldots, f_n)\in U$ and any $g\in\mathbb{C}^{A}$ there exists a decomposition $g=c_0 f_0+\ldots+c_n f_n$ with $\supp c_i\subset\sum A_j,\ j\neq i$.

\end{theor}

From this result we can obtain a new version of Nullstellensatz putting $A_i=d_i E^n$ where $E^n$ is the standard n dimensional simplex and $g=1$. 

In paper \cite{T} the following was obtained$\colon$ 

\begin{theor}[J. Tuitman, 2011]\label{Tui}

    Let $A_0,\ldots, A_n\subset\mathbb{Z}^n$ be convex polytopes and $f=(f_1,\ldots, f_n)\in\mathbb{C}^{A_0}\oplus\ldots\oplus\mathbb{C}^{A_n}$. Let $A=A_0+\ldots+A_n$ be of full dimension $n$. Then $f$ is non-degenerate if and only if for any $g\in\mathbb{C}^{A}$ there exists a decomposition $g=c_0 f_0+\ldots+c_n f_n$ with $\supp c_i\subset\sum A_j,\ j\neq i$.
\end{theor}

This theorem, as well as the classical paper \cite{BR} uses the same definition of non-degeneracy. We will remind it now. For a given polytope $A\subset\mathbb{R}^n$ and $v\in\mathbb{R}^n$ we denote $A(v)$ to be the subset $\{a\vert v.a=\min_{x\in A}{v.x} \}$ of $A$. For a given polynomial $f=\sum f_p \bf{x}^p$ and a subset $A\subset\mathbb{Z}^n$ we denote by $f\vert_A$ the new polynomial $\sum f_p\bf{x}^p$, where the sum is only taken for $p\in A\cap\supp f$. We call the tuple of equations $f_i=0$ {\it non-degenerate} with respect to the set of polytopes $A_i$ if the corresponding system $f_i\vert_{A_i(v)}=0$ has no solutions for any direction $v\neq0$.

We see that all results formulated above allow us to reduce the problem of finding the multiples in the decomposition \ref{eq1} to the problem of solving a system of linear equations. 

Let us introduce another problem closely related to the topic. Let $(A, B_1,\ldots, B_k) = (A, B)$ be a tuple of subsets of $\mathbb{Z}^n$. For a tuple of polynomials $f=(f_1,\ldots, f_k)\in\mathbb{C}^{B_1}\oplus\ldots\oplus\mathbb{C}^{B_k}=\mathbb{C}^B$, we consider the ideal generated by $f$ in the ring of Laurent polynomials. The question is what we can say about the set of polynomials that are members of the ideal $(f)$ and have support in $A$. We denote$\colon$
$$
\mathcal{V}_{A}^{f}=(f) \cap\mathbb{C}^{A}.
$$
\begin{exa}\label{ex1}

Let $f=1+x\in\mathbb{C}[x]$ and $A=\{0,2\}$. We can easily compute that $\mathcal{V}_{A}^{f}=\{c(1-x^2),\ c\in\mathbb{C}\}$.
\end{exa}
It is clear that $\mathcal{V}_{A}^{f}$ depends on the choice of $f$, we will study the special case when $f$ satisfies some conditions of generality. 
\begin{defin}
    We will say that $f=(f_1,\ldots, f_k)\in\mathbb{C}^B$ is r-general if
    \begin{enumerate}
        \item $f_{\sigma(1)},\ldots, f_{\sigma(k)}$ is a regular sequence in the ring of Laurent polynomials for any permutation $\sigma\in S_k$.
        \item $f_{\sigma(1)}\vert_F,\ldots, f_{\sigma(k)}\vert_F$ is a regular sequence in the ring of Laurent polynomials for each facet $F$ of $B$ and for any permutation $\sigma\in S_k$.
        \item The Newton polytope of $f_i$ is $B$.
    \end{enumerate}
\end{defin}

Alongside {\it r-general} we will use the term {\it generic} when its clear from the context what kind of generality we assume, otherwise we will give explanations.

In general $\mathcal{V}_{A}^{f}$ is always a finite dimensional vector space because $\mathcal{V}_{A}^{f}\subset\mathbb{C}^A$ as a linear subspace and $|A|$ is finite. We arrive at the problem how to compute $\dim\mathcal{V}_{A}^{f}$ in terms of $A, B$ for r-general $f\in\mathbb{C}^{B}$. For example we have the following simple fact$\colon$

\begin{utver*}\label{prop1}

If $A$ is a convex polytope in $\mathbb{Z}^n$ and $k=1$ then $\dim\mathcal{V}_{A}^{f}=|A\ominus B|$ for $f$ with Newton polytope $B$. Here $A\ominus B=\{b\in\mathbb{Z}^n\vert b+B\subset A\}$ and it has a special name$\colon$
\end{utver*}

\begin{defin}
    For two subsets $X, Y\subset\mathbb{R}^n$, we denote $\{s\in\mathbb{R}^n\vert s+X\subset Y\}$ by $Y\ominus X$. It is called the Pontryagin difference.
\end{defin}

If $A$ is not convex (see example \ref{ex1}) then of course $\mathcal{V}_{A}^{f}\neq\mathbb{C}^{A\ominus B}$ in general. But it is still true that $\mathcal{V}_{A}^{f}$ is a subspace of $\mathbb{C}^{\conv(A)\ominus B}$. For $k>1$ the situation is getting more interesting.

\begin{exa}\label{ex2}

Consider the tuple $(A, A\cup\{q\}, A\cup\{q\})$. Then a pair of polynomials supported at $A\cup\{q\}$ can be written in the form $f_1=f^0_1+\alpha_1{\bf{x}}^q,\ f_2=f^0_2+\alpha_2{\bf{x}}^q$, where $f^0_1$ and $f^0_2$ are supported at $A$. We can see that $f_1\alpha_2-f_2\alpha_1\in\mathbb{C}^{A}$ so $\mathcal{V}^{f}_A$ is non-empty, but $A\ominus(A\cup\{q\}))=A\ominus(A\cup\{q\}))=\varnothing$.    
\end{exa}

We now give a definitions strongly related to one we have already seen. Let as before $f$ be a tuple of polynomials $(f_1,\ldots, f_{k})$. For $X\subset\mathbb{R}^n$ we denote $X\cap\mathbb{Z}^n$ by $\mathbb{Z}(X)$.

\begin{defin}

    Let $C=(C_1,\ldots, C_{k})$, $C_i\subset\mathbb{Z}^n$. We denote by $\mathcal{V}^{C, f}_A$ the following space$\colon$

$$
\mathcal{V}^{C,\ f}_A=\{g\in\mathbb{C}[{\bf{x}},{\bf{x}^{-1}}]\vert\ \exists (c_1,\ldots, c_{k})\ \text{s.t.}\ g=c_1 f_1+\ldots+c_{k}f_{k}\ \text{and}\ c_i\in\mathbb{C}^{C_i} \}\cap\mathbb{C}^{A}
$$
\end{defin}

The difference between $\mathcal{V}^{C,\ f}_A$ and $\mathcal{V}^{f}_A$ is that $\mathcal{V}_{A}^{f}$ is a space of all members of the ideal generated by $(f)$ with given support $A$ and $\mathcal{V}^{C,\ f}_A$ is a space of all members who can be expressed in the form $c_1 f_1+\ldots+c_{k}f_{k}$ and all the supports of $c_i$ are also restricted by given subsets $C_i$. 

\indent

It is easy to see that $\mathcal{V}^{f}_A=\mathcal{V}^{\mathbb{Z}^n,\ f}_A$

\indent

Since $\dim\mathcal{V}_A^{f}$ is finite, it is easy to see that there exists a compact $C^0=(C^0_1,\ldots, C^0_{k})$, $C_i\subset\mathbb{R}^n$ s.t. $\mathcal{V}^{C^0, f}_A=\mathcal{V}^{f}_A$. We will also use that obviously $\dim\mathcal{V}_A^{f}=\dim\mathcal{V}_{A+u}^{f x^v}$ and $\dim\mathcal{V}_A^{C, f}=\dim\mathcal{V}_{A+u}^{C+u-v,\ f x^v}$ for any $u,\ v\in\mathbb{Z}^n$. The aim of effective membership problem is to find such $C^0$ for prescribed tuple of polynomials $(f)$. 

Let us formulate in details the main result proven in the section \ref{ga3}$\colon$

\begin{theor}\label{THM}
    Let $A, B$ be polytopes in $\mathbb{Z}^n$ such that the last one is not a segment. Let $A+v\subset(t+1)B$ for some non-negative real $t$ and $v\in\mathbb{Z}^n$. Then, for any r-general tuple of polynomials $f_1,\ldots, f_k$ supported at $B$ and any combination $g=c_1 f_1+\ldots+c_k f_k$ supported at $A$, we can represent $g$ in the form $g=s_1 f_1+\ldots+s_k f_k$ where all $s_i$ are supported at $\mathbb{Z}(tB)$.
\end{theor}

\begin{exa}
    For $A=B$, we have $\dim\mathcal{V}_A^{f}=k$ for r-general $f\in\mathbb{C}^B$. And for $A=LB$, where $L$ is not necessarily integral, $$\dim\mathcal{V}_A^{f}=\sum^{k}_{i=1,\ i\leq L}(-1)^{i-1}\binom{k}{i}|\mathbb{Z}((L-i)B)|.$$ as it follows from the proof of Theorem \ref{THM}. 
\end{exa}

The monotonicity of $\dim\mathcal{V}_A^f$ and the proof of theorem \ref{THM} imply that that if $L_1 B\subset A\subset L_2 B$ then 

$$
\sum^{k}_{i=1,\ i\leq L}(-1)^{i-1}\binom{k}{i}|\mathbb{Z}((L_1-i)B)|\leq\dim\mathcal{V}_A^{f}\leq\sum^{k}_{i=1,\ i\leq L}(-1)^{i-1}\binom{k}{i}|\mathbb{Z}((L_2-i)B)|.
$$

Though we cannot give the precise formula for $\dim\mathcal{V}_A^f$ when $A$ is not a Homothety of $B$. 

Results similar to \ref{THM} were obtained in \cite{W}, \cite{CDV} (Theorem 3) in somewhat different settings. We generalise Theorem 1.2. from \cite{W}.

\begin{theor}\label{Wulcan}[E. Wulcan, 2011]
Let $F_1,\ldots, F_m$, and $\Phi$ be polynomials in $\mathbb{C}[z_1,\ldots, z_n]$ and let $P$ be a smooth and “large” polytope that contains the origin and the support of $\Phi$ and the coordinate functions $z_1,\ldots, z_n$. Assume that $\Phi\in(F_1,\ldots, F_m)$ and moreover that the codimension of the zero set of the $F_j$ is $m$ and that it has no component contained in the variety at infinity. Then there are polynomials $G_j$ such that

$$
\sum F_i G_i=\Phi
$$
and $$\supp(F_j G_j )\subset P.$$
\end{theor}
Descriptions of `large' and `has no components at infinity' are given in the original paper. The main difference between \ref{Wulcan} and \ref{THM} is that we do not assume smoothness of the polytope and also that we work in the ring of Laurent polynomials. A polytope of any dimension $n$ is called smooth if each vertex is contained in precisely $n$ edges and the set of corresponding $n$ primitive vectors forms a lattice basis.

To demonstrate the main idea of the proof of theorem \ref{THM}, in section \ref{mex} we will consider the simplified construction similar to what we are going to do later, in section \ref{ga3}, in more general settings and, at last, in section \ref{MR}.

\subsection{Study of discriminants.}

\indent 

\indent

The space $\mathcal{V}^{f}_{A}$ defined above plays an important role in the study of discriminant. For the convenience of the reader let us give some basic explanations. The discriminant of a polynomial with the support $A$ is the closure of the set $\triangle_{A}=\{f\in\mathbb{C}^{A}|\ df=f=0\ \text{has a solution}\}$. The discriminant variety $\triangle_{A}=0$ is projectively dual to the toric variety $X_{A}$ \cite{GKZ} (p.3), that in particular can be defined as follows$\colon X_A$ is the closure of the image of $(\mathbb{C}^{*})^n$ by the so-called monomial map $m_{A}$ 
$$
m_{A}\colon(\mathbb{C}^{*})^{n}\rightarrow\mathbb{CP}^{A}
$$
$$
x\mapsto [\ldots\colon x^{a_{i}} \colon\ldots].
$$

The notion of the discriminant can also be applied to systems of several polynomial equations. For $k=n$ i.e. the case when the number of equations and variables are equal to each other, we have the following classical result$\colon$

\begin{theor}{Bernstein–Kushnirenko formula}

    For a generic system of polynomial equations $f_1=\ldots=f_n$ in $n$ complex variable the number of its solutions on the complex torus $(\mathbb{C}^*)^n$ is equal to $n!\mv(A_1,\ldots, A_n)$ where $A_i$ is the Newton polytope of the corresponding equation and $\mv$ is the mixed volume.  
\end{theor}

A system has a multiple solution if at some point $p$ we have $f(p)=0$ and $df_1(p),\ldots, df_n(p)$ are linearly dependent. The set of all such systems lies in an algebraic subset of the space of parameters $\mathbb{C}^A=\mathbb{C}^{A_1}\oplus\ldots\oplus\mathbb{C}^{A_n}$.   

We define the {\it discriminantal variety} of a tuple $A=(A_1,\ldots, A_k)$ as the closure in $\mathbb{C}^A$ of the set of all systems with multiple solution. 

Discriminants of a general polynomial of several variables with a prescribed Newton polytope were first systematically studied in \cite{GKZ}. The generalization to two or more general polynomial equations is studied starting from \cite{EMD} and \cite{MD}.

In the work \cite{E} it was shown that for sufficiently good $A$ a generic point on the discriminant corresponds to the system with the unique degenerate root which has multiplicity 2. By sufficiently good we mean the following characterisations$\colon$

\begin{defin}
    A tuple of finite sets $X_1,\ldots, X_k$ in $Z^n$ is said to be reduced, if they
cannot be shifted to the same proper sublattice of $Z^n$.
    
\end{defin}

For most of $A$ there are systems that have a root of higher multiplicity. Let us denote by $S_k$ the set of all systems that have a root of multiplicity $k$. We expect that $S_{k+1}$ has codimension 1 in the previous one. Natural problems that arise here are$\colon$

\begin{enumerate}
    \item Classify all $A$ for which $S_{k}$ is empty for some low $k$.
    \item What properties should $A$ satisfy to make $S_{k}$ of expected codimension?
    \item Estimate the length of the maximal possible chain of non-empty consecutive strata.
\end{enumerate}
    $$S_{k}, S_{k-1}, \ldots, S_{0}.$$
    
For example, if $\mv(A)=2$ the strata $S_k$ will be empty for $k\geq3$. The third question is not trivial since there exist tuples with a system who has root of multiplicity $l$ and with a system who has root of multiplicity $l+2$ but for this tuple there is no system with multiplicity $l+1$. For example take bodies depicted below on fig.\ref{exf2}. A system supported on the following pair of bodies have no root of multiplicity $5$, though it can have a root of multiplicity $4$ and $6$.

\begin{figure}[h!]
\includegraphics[width=0.3\linewidth]{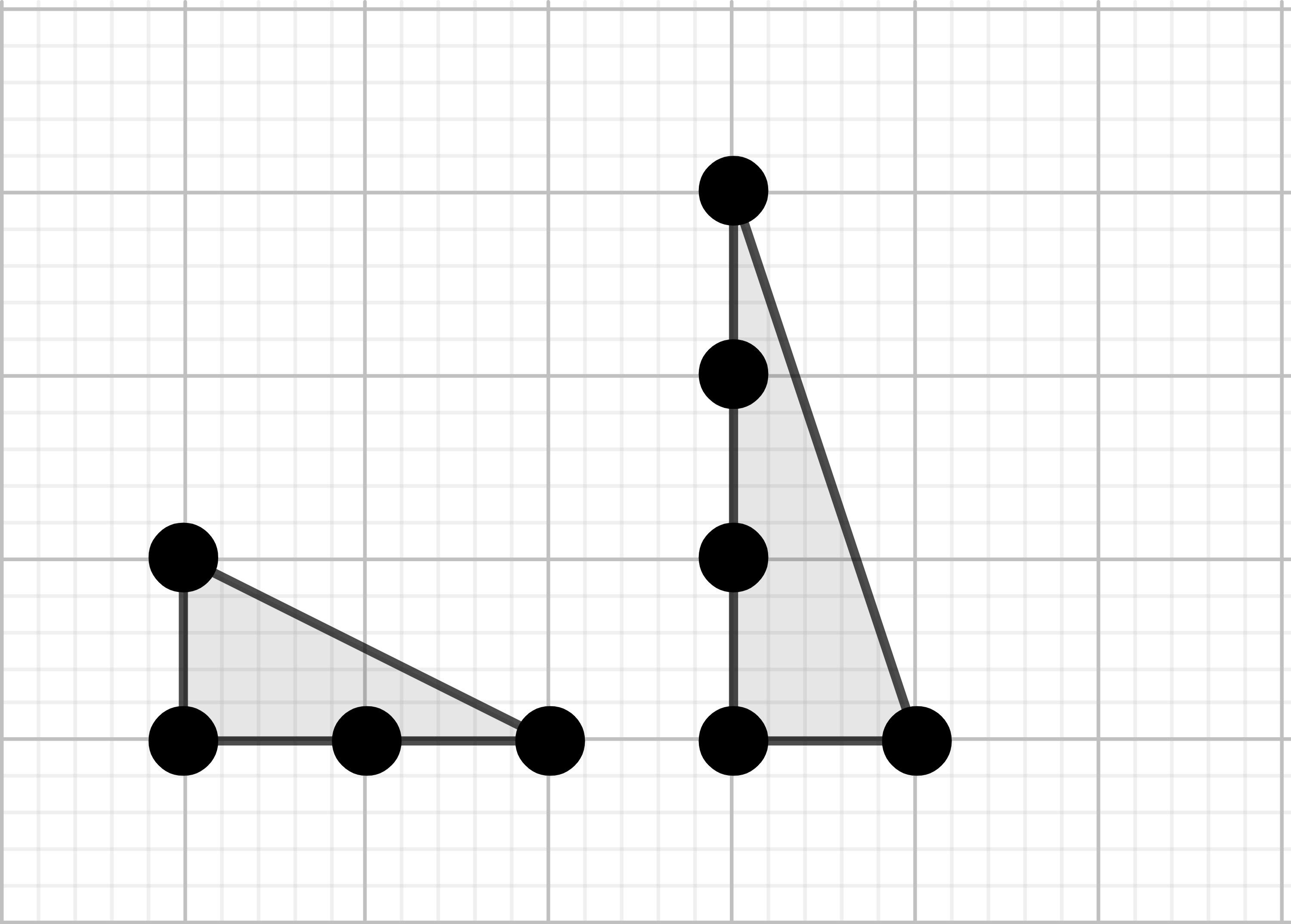}
\caption{tuple with a discontinues chain of strata}
\label{exf2}
\end{figure}

For $k=2, n=d$ these problems were solved in \cite{BN} and \cite{E}, which lead to new results on the Galois groups of general systems of polynomial equations. From the results of the work \cite{GK} it follows that the maximal possible $k$ such that $S_k$ is non empty can be bounded from above by a function depending on the total number of monomials in $A$ but not on their degrees. In this direction some new interesting results were obtained in \cite{BDF}. Namely, for a system in $n$ variables of $n$ equations with the same support set $A$, the highest possible multiplicity of an isolated root was estimated in terms of the decomposition $|A|=n+m+1$. Also, in the recent work \cite{DRM} it was shown that the only smooth polygons $A$ such that curves $\{f=0\},\, f \in \mathbb{C}^A$, may have at most $A_1$-singularities, are equivalent to the 2–simplex or a unit square.

The readers familiar with the topic will likely be interested to compare our study to the classical question of dual defects for toric varieties. Indeed, if we have one support set $A$, then the space of polynomials $\mathbb{C}^A$ can be similarly split into strata $M_k$ according to the multiplicity of the isolated solution of the equation $df=0$ at a degenerate root of f (called the Milnor number of the respective singular point of $f=0$). In this setting, we have two options$\colon$ either $M_1$ is non-empty (the $A$-toric variety is non-defective), or all $M_k$ are empty (the $A$-toric variety is defective). This is similar to our setting of square systems of equations with irreducible support sets (either $S_1$ is non-empty, or all $S_k$ are empty), though the classification of defective $A$ is much more complicated than the respective results \cite{BN}, \cite{E} for square systems (see \cite{DFS}, \cite{E2}, \cite{FI} for three different characterizations and \cite{DC} for their comparison). Extending this analogy to higher $k$, one might expect that, similarly to our example of empty $S_k$ with non-empty $S_{k-1}$ and $S_{k+1}$, one can expect examples of empty $M_k$ with non-empty $M_{k-1}$ and $M_{k+1}$, and the absence of such gaps with small $k$ for sufficiently large support sets $A$ (similarly to Theorem \ref{THM}).

One of the aims of this paper is to bring together two topics described above. We are ready to show the meaning of $\mathcal{V}_A^f$ to the studying of discriminants. We begin with a well known result from the differential geometry of curves$\colon$

\begin{theor}\label{DGT1}

\indent

Let $\gamma$ be a projective curve that is given locally (in some neighbourhood $U$ of a generic point $p\in\gamma$) by a family of holomorphic functions$\colon$ 
\[
\phi\colon U\rightarrow\mathbb{P}^n
\]
\[
z\mapsto[\ldots\colon \phi_{i}(z)\colon\ldots].
\]
If the dimension of the projective hull of $F$ equals $k$, then there exists a descending chain of pencils of osculating hyperplanes
$$
V^{0}\supset V^{1}\supset V^{2}\supset\ldots\supset V^{k},
$$
such that hyperplanes lying in $V^{i}$ intersect $F$ at p with multiplicity at least $i$, and $\dim V^{i}-\dim V^{i-1}=1$. In fact, $V^{i}$ contains a dense subset of hyperplanes that intersect $\gamma$ with multiplicity exactly $i$ and a proper subspace of hyperplanes that intersect $\gamma$ with multiplicity at least $i+1$ for $i<k$, and $V^{k}$ contains a unique hyperplane that intersects $\gamma$ with multiplicity $k$.

\end{theor}

For the sake of readability, we will prove it in the next section, as a proposition. Now, we apply it to the system of $n$ polynomial equations with $n$ variables. Again, let us consider the monomial map$\colon$

$$
m_{A}\colon(\mathbb{C}^{*})^{n}\rightarrow\mathbb{CP}^{A}
$$
$$
x\mapsto [\ldots\colon x^{a_{i}} \colon\ldots].
$$

Consider a system $f=\{f_1=\ldots=f_{n-1}=0\}$ such that all polynomials are from $\mathbb{C}^B$. For generic $f$ this system defines a smooth curve $F$ on the complex torus $(\mathbb{C}^*)^n$ by \cite{Kh}.

The map $m_A$ is well defined on $(\mathbb{C}^{*})^n$. We denote $m_{A}((\mathbb{C}^{*})^n)$ by $\mathbb{T}$. 
Let $g=\sum_{a\in A}c_{a}x^{a}$ be a polynomial supported at $A$. By $\pi_{g}$ we denote the hyperplane in $\mathbb{CP}^{A}$ given by the linear form $\sum_{a\in A} c_{a}z_{a}$, i.e. $\pi_{g}$ has the same coefficients as $g$ has. Assume that the projective hull of $m_A(F)$ has dimension $k$. Then, according to the theorem \ref{DGT1} at the generic point $p$ of $m_A(F)$ we have a set of hyperplanes $H_i$, $i=0,\ldots, k$ in $\mathbb{CP}^{A}$ such that $\pi_i$ intersects $m_A(F)$ at $p$ with multiplicity $i$.

In the next section we will prove that $k=|A|-\dim\mathcal{V}_A^{f}-1$ and it does not actually depend on the choice of a generic curve $F$ supported at $B$. Moreover the intersection multiplicity of $\pi_i$ and $m_A(F)$ at $p$ is equal to the intersection multiplicity of $F$ and the hypersurface $f_{n}^{i}$ which corresponds to the hyperplane $\{\pi_i=0\}$.

One of the main results given in this paper is$\colon$

\begin{theor}\label{th1}

\indent

Let $A, B_1,\ldots, B_{n-1}\subset\mathbb{Z}^{n}$ be a tuple of polytopes such that $\mv(A, B_1,\ldots, B_{n-1})>0$. Then for almost all points $p$ on a generic curve $\gamma=\{f=0\}$ supported at $B$, there exists a set of osculating curves $g_{i}$ supported at $A$, such that $g_{i}$ intersects $\gamma$ at $p$ with multiplicity $i$ for any $i\leq|A|-\dim\mathcal{V}_A^{f}-1$, i.e. the multiplicity of the root $p$ of the system $f=g_i=0$ is $i$.
\end{theor}

In particualr, as we will see later, for $A=LB$, $L\in\mathbb{Z}$, for generic tuple $f\in\mathbb{C}^B$ we can achieve $|A|-\dim\mathcal{V}_A^f-1=$ 

$$=\sum^{n-1}_{i=0,\ i\leq L}(-1)^{i}\binom{n-1}{i}|\mathbb{Z}((L-i)B)|-1=$$

which is the partial $(n-1)-$th finite difference of the Ehrhart polynomial of $B$, and thus it increases to infinity as $L^1$. Moreover, using the formula $\triangle^{n-1}_1 x^n=n! x-\frac{n!(n-1)}{2}$ for finite difference, and for $L\geq n-1$, we have 

$$
=n!V(B)L-V(B)\frac{n!(n-1)}{2}+(n-1)!\beta(B)-1=
$$

$$
=\mv(LB,B,\ldots,B)-V(B)\frac{n!(n-1)}{2}+(n-1)!\beta(B)-1.
$$

Where $V(B), \beta(B)$ are the lattice volume of $B$ and the second coefficient of the Ehrhart polynomial correspondingly. It's interesting to compare the maximal possible amount of isolated roots of the system suported at $(LB,B,\ldots,B)$ and the expression above. We can see that they vary by the term $\beta(B)(n-1)!-V(B)n!(n-1)/2-1$. From this we can deduce the inequality$\colon$

$$\beta(B)(n-1)!-V(B)n!(n-1)/2-1\leq 0$$

which is 

$$
\beta(B)\leq\frac{n(n-1)}{2}V(B)+1/(n-1)!.
$$
And this inequality is well known, for example see \cite{BDD}, Theorem 6.

In fact we can derive from theorem \ref{th1} here that the discriminantal variety of the tuple $(A, B)$ admits a chain of consecutive non empty strata of expected dimension. We will do it in section \ref{str}.

In sections \ref{NC} and \ref{pc} we are working on the technical tools designed to justify the major steps of the proof of Theorem \ref{THM}. In section \ref{ga3} we prove it and develop the theory of computing $\mathcal{V}_A^f$ in dimension $n$ for arbitrary $A$ in a particular case $k=3$, $B_1=B_2$. In the final section, we prove the main result by induction, using the particular case $k = 3$ to make an induction step.  

\section{\bf Proof of \ref{th1}.}\label{prel}
In this section we prove theorem \ref{th1} and \ref{DGT1}. Let us introduce some notation. Recall that for each $a=(a_{1},\ldots, a_{n})\in\mathbb{Z}^{n}$ we use the expression $z^{a}$ to denote the monomial $z_{1}^{a_{1}}\ldots z_{n}^{a_{n}}$, where $z_{i}\in\mathbb{C}^{*}$ and for each $A\subset\mathbb{Z}^{n}$ we denote by $\mathbb{C}^{A}$ the space of Laurent polynomials $\sum_{a\in A}c_{a}z^a$ supported at $A$. For any polynomial $f$ the convex hull of its support is called the Newton polytope of $f$. For a tuple of subsets $S=(A_{1},\ldots, A_{k})\subset\mathbb{Z}^{n}$, we have $\mathbb{C}^{S}=\mathbb{C}^{A_{1}}\oplus\ldots\oplus\mathbb{C}^{A_k}$. We identify each  $f=(f_{1},\ldots, f_{k})\in\mathbb{C}^{S}$ with the corresponding system of polynomial equations $f_{1}=\ldots=f_{k}=0$. For any two subsets $A, B$ of\ $\mathbb{Z}^{n}$ their Minkowski sum $A+B$ is defined as follows$\colon A+B=\{a+b|a\in A, b\in B\}\subset\mathbb{Z}^{n}$. We say that a finite subset $A\subset\mathbb{Z}^n$ is convex if it can be represented in the form $A=\mathbb{Z}^n\cap\bar{A}$, for some convex $\bar{A}\subset\mathbb{R}^{n}$.

As it was promised, we give here a proof of \ref{DGT1}

\begin{proof}
    Let the dimension of the projective hull of $\gamma$ be equal to $k$. Without loss of generality we may assume that $\gamma$ lies in the projective space of dimension $k$ and is not contained in any hyperplane. The latter implies that coordinate functions $\phi_{i}(z)$ are linearly independent. It is well known that any set of analytic functions is linearly independent if and only if the Wronskian of this set is not identically 0. Recall that the Wronskian can be computed by the following formula
\[\det
\begin{bmatrix}
    \phi_{0}       & \phi_{1} & \phi_{2} & \dots & \phi_{k} \\
    \phi_{0}^{(1)}       & \phi_{1}^{(1)} & \phi_{2}^{(1)} & \dots & \phi_{k}^{(1)} \\
    \hdotsfor{5} \\
    \phi_{0}^{(k)}       & \phi_{1}^{(k)} & \phi_{2}^{(k)} & \dots & \phi_{k}^{(k)}
\end{bmatrix}.
\]
\\If at some point of the curve the Wronskian vanishes, we will say that this point is Weierstrass, it can be shown that the property of being Weierstrass is invariant with respect to the choice of holomorphic parametrization. Now let us provide a relation between that property and the statement of the Lemma. Expand each $f_{i}$ as a Laurent power series at $p$ and substitute them into the expression $H(x_{0},\ldots, x_{k})=c_{0}x_{0}+c_{1}x_{1}+\ldots+c_{n}x_{n}$. Regrouping the terms by their exponents $z^i$ we obtain$\colon$
\[
H(\phi_{0}(z),\ldots, \phi_{k}(z))=z^{0}(c_{0}\phi_{0}(0)+\ldots+c_{k}\phi_{k}(0))/0!+\]
\[z^{1}(c_{0}\phi_{0}^{(1)}(0)+\ldots+c_{k}\phi_{k}^{(1)}(0))/1!+\ldots\]
\[
z^{j}(c_{0}\phi_{0}^{(j)}(0)+\ldots+c_{k}\phi_{k}^{(j)}(0))/j!+\ldots
\]
\[
z^{k}(c_{0}\phi_{0}^{(k)}(0)+\ldots+c_{k}\phi_{k}^{(k)}(0))/k!.
\]
The curve $\gamma$ and the hyperplane $\{H=0\}$ intersect with multiplicity $i$ at $p$ if and only if\\ $\bar{H}(z)=H(\phi_{0}(z),\ldots, \phi_{k}(z))=cz^{i}+h.o.t., c\neq0$. We list the first $k+1$ coefficients of the expansion $\bar{H}(z)$ in the following matrix$\colon$
\[A=
\begin{bmatrix}
    \phi_{0}(0)       & \phi_{1}(0) & \phi_{2}(0) & \dots & \phi_{k}(0) \\
    \phi_{0}^{(1)}(0)       & \phi_{1}^{(1)}(0) & \phi_{2}^{(1)}(0) & \dots & \phi_{k}^{(1)}(0) \\
    \hdotsfor{5} \\
    \phi_{0}^{(k)}(0)       & \phi_{1}^{(k)}(0) & \phi_{2}^{(k)}(0) & \dots & \phi_{k}^{(k)}(0)
\end{bmatrix}.
\]
As we can see, the matrix $A$ is indeed the Wronskian of the system of functions $\{\phi_{i}\}$ at zero. Since $\gamma$ does not lie in any hyperplane, the Wronskian matrix is not identically zero. Switching from $p$ to another point $q\in U$ if necessary, we conclude that $A$ has maximal rank. To finish the proof, we choose $V^{i}$ to be the set of hyperplanes corresponding to solutions of the first $i$ rows of $A$.
\end{proof}

By $\mathbb{CP}^{A}$ we denote the projective space whose homogeneous coordinates are enumerated by $A$. We denote by $\mathcal{Z}(\ldots)$ the zero set of an ideal or a set of polynomials. We say that a curve $F$ is supported at $B=(B_1,\ldots, B_{n-1})\subset\mathbb{Z}^n$ if there exists $f\in\mathbb{C}^B$ such that $F=\mathcal{Z}(f)$. The number of points in the set $A$ is denoted by $|A|$. The mixed volume of convex polytopes $A, B_1,\ldots, B_{n-1}$ is denoted by $\mv(A, B_1,\ldots, B_{n-1})$.

The monomial map is denoted by $m_A$ just like in the previous section. Let $g=\sum_{a\in A}c_{a}x^{a}$ be a polynomial supported at $A$. By $\pi_{g}$ we denote the hyperplane in $\mathbb{CP}^{A}$ given by the linear form $\sum_{a\in A} c_{a}z_{a}$, i.e. $\pi_{g}$ has the same coefficients as $g$ has. The next lemma explains the correspondence between hyperplanes in $\mathbb{CP}^{A}$ containing $m_A(F)$ and hypersurfaces in $(\mathbb{C}^*)^n$ which contain $F$.

\begin{lemma}\label{l21}
Let $A, B_1,\ldots, B_{k}$ be a tuple of subsets in $\mathbb{Z}^{n}$. Then there exists an open dense subset $X\subset\mathbb{C}^B$ such that for any $f\in X$ and for any $g\in\mathbb{C}^{A}$ the following conditions are equivalent$\colon$
\begin{enumerate}
    \item $m_{A}(\mathcal{Z}(f))$ is contained in $\pi_{g}$;
    \item $g$ is in the ideal generated by $f$.
\end{enumerate}

If all $B_i$ are the same and $\dim B>k$, there exists an open dense subset $Y\subset\mathbb{C}^B$ such that for all $f\in Y$, $f$ is r-general.  

\end{lemma}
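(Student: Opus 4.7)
The plan is to translate the geometric incidence condition into a vanishing condition on the torus, and then to invoke the Nullstellensatz together with a genericity argument showing that $I_f$ is its own radical.

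First I would observe that for any $x\in(\mathbb{C}^*)^n$ and any $g=\sum_{a\in A}c_a x^a$, the value of the linear form $\sum_a c_a z_a$ defining $\pi_g$ at the point $m_A(x)=[\ldots:x^a:\ldots]$ equals $g(x)$. Therefore $m_A(x)\in\pi_g$ iff $g(x)=0$, and globally
\[
m_A(\mathcal{Z}(f))\subset\pi_g \iff g\bigl|_{\mathcal{Z}(f)}\equiv 0.
\]
Thus condition (1) is equivalent to the vanishing of $g$ on the torus zero locus of $f$. The implication (2)$\Rightarrow$(1) is immediate, so the content lies in establishing the converse on a dense subset of systems.

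Second, applying the Nullstellensatz in the coordinate ring $\mathbb{C}[x_1^{\pm1},\ldots,x_n^{\pm1}]$ of the affine variety $(\mathbb{C}^*)^n$, one gets $g\in\sqrt{I_f}$ whenever $g$ vanishes on $\mathcal{Z}(f)$. The lemma reduces to exhibiting a dense $X\subset\mathbb{C}^B$ on which $\sqrt{I_f}=I_f$. I would take $X$ to consist of those $f$ such that (a) $f_1,\ldots,f_{n-1}$ is a regular sequence in the Laurent ring, and (b) the $(n-1)\times n$ Jacobian matrix of $f$ has maximal rank at every point of $\mathcal{Z}(f)\subset(\mathbb{C}^*)^n$. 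Both (a) and (b) are Zariski-open conditions on $\mathbb{C}^B$, and their non-emptiness (hence density) follows from Bernstein's non-degeneracy theorem applied to $(B_1,\ldots,B_{n-1})$: a sufficiently generic system cuts out a smooth reduced complete intersection curve on the torus. For $f\in X$, the Jacobian criterion together with the Cohen--Macaulayness of the complete intersection $\mathbb{C}[x^{\pm1}]/I_f$ forces the local rings at every point of $\mathcal{Z}(f)$ to be regular, so $I_f$ is unmixed of height $n-1$ with no embedded components and agrees with the ideal of its zero locus. In particular $I_f$ is radical, and the conclusion follows.

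The main obstacle will be the verification that the set $X$ is actually dense in $\mathbb{C}^B$ without additional hypotheses on the $B_i$. If the mixed volume $\mv(A,B_1,\ldots,B_{n-1})$ is positive but some $B_i$ are low-dimensional or degenerate on proper faces, the smoothness condition (b) may need to be replaced by Bernstein non-degeneracy on every face, in the spirit of the definition recalled after Theorem \ref{Tui}; this still defines a Zariski-open subset of $\mathbb{C}^B$ that is non-empty by \cite{BR}, and it suffices for the Jacobian criterion on the torus. With that replacement in place, the three steps above assemble into a complete proof of the equivalence (1)$\Leftrightarrow$(2) for all $f\in X$ and all $g\in\mathbb{C}^A$.
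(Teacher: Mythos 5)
Your argument is essentially the paper's: translate condition (1) into $g$ vanishing on $\mathcal{Z}(f)$, apply the Nullstellensatz to get $g\in\sqrt{I_f}$, and reduce the lemma to showing that $I_f$ is radical on a dense set of systems. The one real difference is how the radicality is established: the paper simply cites Khovanskii's theorem (\cite{Kh}, Theorem 1) that $\mathbb{C}[x^{\pm}]/I_f$ is smooth for almost all $f\in\mathbb{C}^B$ and then invokes regularity of the local rings, whereas you reconstruct this via the Jacobian criterion and Cohen--Macaulayness of the complete intersection. Your flagged worry about the density of $X$ when the $B_i$ are degenerate (and the fact that openness of condition (b) is not automatic, since the projection from the incidence locus is not a priori closed) is exactly the point that \cite{Kh} settles, so the cleanest repair of your sketch is to cite that theorem rather than rederive it from Bernstein non-degeneracy.
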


\begin{proof}
($\Rightarrow$)Let $f\in\mathbb{C}^B$. From \cite{Kh} (Theorem 1) it follows that for almost all tuples of coefficients of the system $f=0$, the scheme $\mathbb{C}[x]/(f)$ is smooth. By definition, it means that for any point $q\in\mathcal{Z}(f)$, the corresponding local ring $(\mathbb{C}[x]/(f))_q$ is regular and thus by Auslander–Buchsbaum theorem it is a unique factorisation domain, in particular it is reduced and so is $\mathbb{C}[x]/(f)$. We denote the set of all such $f$ by $X$. Choose arbitrary $x\in\mathcal{Z}(f)$ and denote $m_A(x)$ by $p$. Since $p\in\pi_{g}$ we have $\sum c_{a}x^{a}=0$ therefore $x\in \mathcal{Z}(g)$ and $\mathcal{Z}(f)\subset \mathcal{Z}(g)\Rightarrow$ by Hilbert nullstellensatz for Laurent polynomials $g^r\in (f)$ for some natural $r$. Since $(f)$ is radical, $r$ can be chosen to be 1. 
($\Leftarrow$)For the other implication, let $g=c_1 f_1+\ldots+c_{k}f_{k}$, then the vanishing set of $f$ vanishes $g$ as well hence $\mathcal{Z}(f)\subset \mathcal{Z}(g)$. Therefore $m_{A}(\mathcal{Z}(f))\subset\pi_{g}$.

Finally, from \cite{Kh} (Theorem 17) it follows that for $\dim B>k$, for a sufficiently general system of equations, both tuples $f_1=\ldots=f_{k-1}=0$ and $f_1\vert_F,\ldots, f_{k-1}\vert_F$ for any facet $F$ of $B$ define irreducible varieties which are reduced by the previous paragraph. For a reduced varieties its ideal is equal to the initial ideal defining variety and thus both ideals $(f_1,\ldots,f_{k-1})$ and $(f_1\vert_F,\ldots, f_{k-1}\vert_F)$ are prime for any facet $F$ of $B$. From this it follows that any tuple $f_1,\ldots,f_k$ from $Y$ is regular alongside $(f_1\vert_F,\ldots, f_{k}\vert_F)$.
\end{proof}

From lemma \ref{l21} it immediately follows that $|A|-\mathcal{V}_A^f-1$ is equal to the dimension of the projective hull of $m_{A}(\mathcal{Z}(f))$ for generic $f$ supported at $B$. 
Now, we will prove theorem \ref{th1}
\begin{proof}
Let $F=\mathcal{Z}(f)$ be a smooth curve supported at $B$. Since $\mv(A, B_1,\ldots, B_{n-1})$ is the number of isolated roots of a generic system supported at $(A, B)$ we can see that the number of intersections of $m_A(F)$ and a generic hyperplane in $\mathbb{P}^A$ is positive and hence $m_A\vert_F$ is not degenerate and $\overline{m_A(F)}$ is an algebraic curve. Consider a generic point $p$ of $m_A(F)$ (generic here means non-Weierstrass point and not the image of a branch point). Let the dimension of the projective hull of $m_A(F)$ be $k$. Since $p$ is non-Weierstrass then by Theorem \ref{DGT1}, there exists a tuple of osculating hyperplanes $\{H_{i}\}_{i=0}^{k}$ such that $H_{i}$ intersects $m_A(F)$ at $m_{A}(p)$ with multiplicity $i$. We denote the coefficients of these hyperplanes by $H_{a}^{i}$. Since $F$ is a smooth curve and $p$ is not the image of a branch point of $m_A\vert_F$, we can choose a holomorphic parametrization in some neighbourhood of $p\colon z_i=z_i(t)$. For each $i\in\{0,\ldots,k\}$ define $g_{i}=\sum_{a}H_{a}^{i}z^a\in\mathbb{C}^{A}$, $G_{i}=\mathcal{Z}(g_{i})$. Substitute $z_i(t)$ into the equation of $g$, and write the Laurent series for $g$
$$
g=g_{m}t^m+h.o.t.
$$
where $m$ is the intersection multiplicity of $F$ and $G_{i}$ at $p$. Let $L$ be a linear form not vanishing at $m_{A}(p)$. Then $\sum_{a}H_{a}^{i}z_a/L$ is a meromorphic function that has the same order at $m_{A}(p)$ as $g$ has at $p$. Indeed, substituting $z_i(t)$ into the function $\sum_{a}H_{a}^{i}z_a/L$ we will obtain $(g_{m}t^m+h.o.t.)/(c_{0}+c_{1}t^{l}+h.o.t.), c_{0}, c_{1}\neq0, l>0$. Thus we have the Laurent expansion of the following form
$$
(g_{m}t^m+h.o.t.)/(c_{0}+c_{1}t^{l}+h.o.t.)=c^{-1}_{0}(g_{n}t^n+h.o.t.)(1-c_{1}c^{-1}_{0}t^{l}+h.o.t.)=c^{-1}_{0}g_{m}t^m+h.o.t.
$$
Therefore, the intersection multiplicities of $F$ and $G_{i}$ at $p$ and $m_A(F)$ and $H^{i}$ at $m_{A}(p)$ are both equal to $i$ as desired.
\end{proof}

\section{\bf Strata of the dual variety of a projective curve.}\label{str}

In this section we describe the structure of the incidence space of curves and osculating hyper-surfaces with given support, that were introduced in the previous sections. We show that the natural stratification of this space corresponds to the stratification of the discriminant variety in one of its irreducible components. We work with the partial case when all $B_i$ are the same, but we sometimes enumerate them like they are different to avoid confusion in the notation and make formulas involving dimensions and summations more readable.

To be precise, we formulate that as a theorem$\colon$

\begin{theor}\label{STR}
Let $(A, B)=(A, B_1, \ldots, B_{n-1})$ be a tuple of subsets from $\mathbb{Z}^n$ such that all $B_i$ are the same, $\mv(A, B)>1$ and $\dim B=n$. Then the discriminantal variety $\triangle_{(A, B)}$ possess a chain of non empty consecutive strata

$$
S_{|A|-\mathcal{V}^B_A-1}\subset\ldots\subset S_i\subset\ldots\subset S_{2}\subset\triangle_{(A, B)}
$$

where $S_i$ contains a dense subset consisting of systems with root of multiplicity $i$ and $\dim S_i=\sum|B_i|+|A|+1-i=n|B|+|A|+1-i$.
 \end{theor}
Let $\Gamma$ be a projective curve that is given locally (in some neighbourhood $U$ of a generic point $p\in\Gamma$) by a family of analytic functions$\colon$ 
\[
\phi\colon U\rightarrow\mathbb{P}^n
\]
\[
z\mapsto[\ldots\colon \phi_{i}(z)\colon\ldots].
\]
Let the dimension of the projective span of $\Gamma$ equal $k$. It is well known that any set of analytic functions is linearly independent if and only if the Wronskian of this set is not identically 0. Recall that the Wronskian can be computed by the following formula
\[\det
\begin{bmatrix}
    \phi_{0}       & \phi_{1} & \phi_{2} & \dots & \phi_{k} \\
    \phi_{0}^{(1)}       & \phi_{1}^{(1)} & \phi_{2}^{(1)} & \dots & \phi_{k}^{(1)} \\
    \hdotsfor{5} \\
    \phi_{0}^{(k)}       & \phi_{1}^{(k)} & \phi_{2}^{(k)} & \dots & \phi_{k}^{(k)}
\end{bmatrix}.
\] With the Wronskian matrix, it can be easily shown that the coefficients of the osculating hyperplanes of tangency $i=0,\ldots,k$ depends holomorphicaly on $z$ in some neighbourhood of $p$, e.g. \cite{MRS}. If $p=p(\omega), \omega\in U$ is a point on the curve viewed as a holomorphic function of one complex variable and $\phi$ is some fixed embedding we denote by $W_\phi(\omega)$ the determinant of the Wronskii matrix as a function of $\omega$. We would like to apply all this machinery to the curve supported at $B$ embedded into projective space via $m_A$. 

Let $I=(f_1, \ldots, f_{n-1})\subset\mathbb{C}[\ldots,z^{\pm}_i,\ldots], f_i\in\mathbb{C}^{B_i}$ be an ideal generated by polynomials supported at $B$ and $g$ be a polynomial supported at $A$. Assume that $g\in I$. We know that for a given $f$ the set of all such $g$ is a finite-dimensional vector space over $\mathbb{C}$. We denote this vector space by $\mathcal{V}_{A}^{f}$ according to the agreement established in the introductory section. For generic $f$, $\dim\mathcal{V}^{f}_{A}$ does not depend on the choice of $f$. We will show that in proposition \ref{Generic}.
So, we ignore $f$ in the expression $\mathcal{V}^{f}_{A}$ and use $\mathcal{V}^{B}_{A}$ instead. Let $\Gamma=\overline{m_A(\mathcal{Z}(f))}$.

There exists an algebraic subset $\triangle_B\subset\mathbb{C}^B$ such that for any curve $\{f=0\}$ in the complement of $\triangle_B$ we have $\dim\mathcal{V}_A^{f}=\dim\mathcal{V}_A^{B}$. 

Let $p\in\Gamma$. We will call $p$ {\it flag generic} if it is smooth and non-Weierstrass. Consider the incidence sets 
$$
\Sigma=\{(p, f, g)\vert\ m_{A}(p)\in \Gamma\ \text{is flag generic},\ f\notin\triangle_B,\ T_p(\Gamma)\subset\mathcal{Z}(H_g)\}\subset(\mathbb{C}^{*})^n\times\mathbb{C}^{B}\times\mathbb{C}^{A}
$$
$$
\Sigma_0=\{(p, f)\vert\ m_{A}(p)\in \Gamma\ \text{is flag generic},\ f\notin\triangle_B\}\subset(\mathbb{C}^{*})^n\times\mathbb{C}^{B}
$$

We have natural projections 
\begin{center}

\begin{tikzcd}
\Sigma \arrow[rd, "\pi"] \arrow[r, "D"] & \mathbb{C}^{B}\times\mathbb{C}^{A} \\
& \Sigma_0 \arrow[r, "\pi_0"]  & \mathbb{C}^B\setminus\triangle_B\\

\end{tikzcd}
 
\end{center}

by definition $\forall f\in\mathbb{C}^B\setminus\triangle_B$, $\overline{\pi_0^{-1}(f)}=\{(p, f)\vert\  m_{A}(p)\in\overline{m_{A}(\mathcal{Z}(f))}\}$ is equal to $\overline{m_{A}(\mathcal{Z}(f))}$ and thus irreducible of constant dimension 1. Taking the closure of $\Sigma_0$ we have
$$
\overline{\pi_0}\colon\overline\Sigma_0\rightarrow\mathbb{C}^B
$$
where $\overline{\pi_0}$ is just the projection on the second factor and it coincides with $\pi_0$ on $\Sigma_0$.

\begin{lemma}
$\Sigma_0$ is an irreducible subset of $(\mathbb{C}^{*})^n\times\mathbb{C}^B$.
\end{lemma}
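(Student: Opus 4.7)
The plan is to realise $\Sigma_0$ as the total space of the surjective morphism $\pi_0\colon\Sigma_0\to\mathbb{C}^B\setminus\triangle_B$ and to invoke the standard criterion that a morphism whose base is irreducible and whose fibers are all irreducible of the same dimension has irreducible total space (see e.g. Shafarevich, \emph{Basic Algebraic Geometry I}, Ch.~I, \S6). The base $\mathbb{C}^B\setminus\triangle_B$ is immediately irreducible, being a non-empty Zariski open subset of the affine space $\mathbb{C}^B$.

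To check that the fibers are irreducible of constant dimension, I would first enlarge $\triangle_B$ (still to a proper Zariski closed subset, so the complement remains a non-empty open and hence irreducible) so that for every $f$ in its complement the zero set $\mathcal{Z}(f)\subset(\mathbb{C}^*)^n$ is a smooth \emph{irreducible} curve; this is a standard consequence of Khovanskii's theorem on non-degenerate complete intersections in the torus, essentially the same genericity already invoked in the proof of Lemma \ref{l21}. The hypothesis $\mv(A,B)>0$ then guarantees that $m_A\vert_{\mathcal{Z}(f)}$ is non-constant, so that $\Gamma=\overline{m_A(\mathcal{Z}(f))}$ is an honest curve and the notion of a Weierstrass point on $\Gamma$ is meaningful. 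The fiber $\pi_0^{-1}(f)$ consists of the $p\in\mathcal{Z}(f)$ whose image $m_A(p)$ is smooth and non-Weierstrass on $\Gamma$; these conditions, together with avoiding the ramification locus of $m_A\vert_{\mathcal{Z}(f)}$ (which must be removed so that $p$ is determined by $m_A(p)$), each exclude only finitely many points of the projective curve $\mathcal{Z}(f)$, since the Weierstrass points form a finite set on any smooth projective curve embedded non-degenerately in $\mathbb{P}^A$. Hence $\pi_0^{-1}(f)$ is a non-empty Zariski open subset of the irreducible curve $\mathcal{Z}(f)$, which is itself irreducible of dimension $1$.

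Surjectivity of $\pi_0$ is then immediate from non-emptiness of each fiber, and the cited criterion yields irreducibility of $\Sigma_0$, together with the dimension count $\dim\Sigma_0=\dim\mathbb{C}^B+1$. The main obstacle is the fiber analysis — specifically, the irreducibility of the generic $\mathcal{Z}(f)$ — which is not fully formal and relies on the Bernstein--Khovanskii--Bertini style genericity; once this is granted, the removal of finitely many exceptional points and the final assembly of the argument via the product structure are routine.
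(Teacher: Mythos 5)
The paper's proof is a direct algebraic argument: it observes that $\overline{\Sigma}_0$ is cut out in $(\mathbb{C}^*)^n\times\mathbb{C}^B$ by equations $f_i=0$ each of which is \emph{linear with unit coefficient} in one of the coordinate variables $f_i^{b(i)}$ of $\mathbb{C}^B$; eliminating those variables exhibits $\mathbb{C}[\overline{\Sigma}_0]$ as a Laurent-polynomial ring, hence a domain, so $\overline{\Sigma}_0$ is irreducible with no genericity needed. Your proposal projects to the \emph{other} factor, $\mathbb{C}^B\setminus\triangle_B$, and wants to conclude from irreducibility of the base plus irreducibility of the generic fiber $\mathcal{Z}(f)$.

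There is a genuine gap in that step: the generic fiber $\mathcal{Z}(f)$ need not be irreducible, and the lemma imposes no hypothesis on $B$ that would make it so. For instance with $n=2$ and $B_1=\{(0,0),(2,0)\}$, a generic $f_1=a+bz_1^2$ has zero set consisting of two disjoint lines $\{z_1=\pm\sqrt{-a/b}\}\times\mathbb{C}^*$ in $(\mathbb{C}^*)^2$ — reducible — even though $\overline{\Sigma}_0=\{a+bz_1^2=0\}\simeq(\mathbb{C}^*)^2\times\mathbb{C}$ is plainly irreducible. Khovanskii's theorem on non-degenerate complete intersections guarantees smoothness and connectedness/irreducibility only under additional lattice hypotheses (e.g.\ that the supports cannot be shifted into a proper sublattice), which are not assumed here; so ``enlarge $\triangle_B$ so that every $\mathcal{Z}(f)$ is a smooth irreducible curve'' is not available in this generality. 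The fibration criterion you cite is sufficient but not necessary for irreducibility of the total space, so when the fibers are reducible your argument simply fails to conclude, while the statement remains true. The fix is to fiber over the other factor: the projection of $\overline{\Sigma}_0$ to $(\mathbb{C}^*)^n$ has fibers that are nonempty open subsets of affine-linear subspaces of $\mathbb{C}^B$ (the conditions $f_i(p)=0$ are linear in the coefficients with nonvanishing coefficient $z^{b(i)}$), hence always irreducible of constant dimension; this is the geometric content of the paper's coordinate-ring computation.
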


\begin{proof}
As we can see $\overline{\Sigma_0}$ is given by the system of equations
$$
f_1=f_2=\ldots=f_{n-1}=0\ \text{in}\ (\mathbb{C}^{*})^n\times\mathbb{C}^B
$$
For each $i\in\{0,\ldots, n-1\}$ take any $b(i)\in B_i$ and consider the equivalent system 
$$
\tilde{f}_i=f^{b(i)}_i+z^{-b(i)}\overline{f}_i=0
$$
where $\overline{f}_i=\sum_{b\in(B_i\setminus b(i))}f^{b}_i z^{b}$. Ideals that are generated by both systems of equations are equivalent. Consider the quotient ring 

$$
\mathbb{C}[\overline{\Sigma}_0]=\mathbb{C}[\ldots, z_i^{\pm},\ldots][\ldots,f_i^{b_i^j},\ldots]/(f_1,\ldots, f_{n-1})
$$
where $b_i^j\in B_i$, for given $i$, $j=0,\ldots, |B_i|$. Since $z^{-b_i}\overline{f}_i\in\mathbb{C}[\ldots, z_i^{\pm},\ldots][\ldots,f_i^{b_i^j},\ldots]$, we have an isomorphism
$$\mathbb{C}[\ldots, z_i^{\pm},\ldots][\ldots,f_i^{b_i^j},\ldots]/(f_1,\ldots, f_{n-1})\backsimeq\mathbb{C}[\ldots, z_i^{\pm},\ldots][\ldots,f_i^{b_i^j},\ldots]/(f^r_1,\ldots, f^r_{n-1})=$$
$$
=\mathbb{C}[\overline{\Sigma}_0]=\mathbb{C}[\ldots, z_i^{\pm},\ldots][\ldots,f_i^{b_i^j},\ldots],\ \text{and in each variable}\ f_i^{b_i^j}\ \text{we have}\ j\neq b(i).
$$

We deduce that the ideal $(f_1,\ldots, f_{n-1})$ is prime. Then $\overline{\Sigma}_0$ is irreducible. Since $\Sigma_0$ is a dense subset of $\overline{\Sigma}_0$ it is also irreducible.
\end{proof}

Let us compute the dimension of $\overline{\Sigma}_0$. For all $f\in\mathbb{C}^B\setminus\triangle_B$ we have $\dim\overline{\pi}_0^{-1}(f)=1$, so $\dim\Sigma_0=\dim\overline{\Sigma}_0=\dim\mathbb{C}^B+1=\sum |B_i|+1=n|B|+1$.

Projection $\pi\colon\Sigma\rightarrow\Sigma_0$ turns $\Sigma$ into a bundle over $\Sigma_0$ whose fibers are linear spaces of osculating hypersurfaces supported at $A$. If $\mv(A, B_1,\ldots, B_{n-1})>1$, $\Gamma$ is not a line. Osculating hypersurfaces are in correspondence with osculating hyperplanes in $\mathbb{CP}^{|A|-1}$ so the dimension of each fiber is $\dim\mathbb{CP}^{|A|-1}-1=|A|-2$ since any curve with non-zero curvature is dual effective. We conclude that $\Sigma$ is irreducible of dimension $\sum |B_i|+1+|A|-2=\sum |B_i|+|A|-1$ as a total space of vector bundle over irreducible base.

By the definition of $\Sigma$, $D(\Sigma)$ and thus $D(\overline{\Sigma}_0)$ is an irreducible subset of the discriminant of maximal dimension. 
Consider the subbundle

$$\Sigma^{i}=\{(p, f, g)\vert\ m_{A}(p)\in \Gamma\ \text{is flag generic},\ f\notin\triangle_B,\ \mathcal{Z}(H_g)\in V_p^{i}\}\subset(\mathbb{C}^{*})^n\times\mathbb{C}^{B}\times\mathbb{C}^{A}\subset\Sigma$$

where $V_p^{i}$ was defined above in \ref{DGT1}. Let us prove that $\cod\Sigma^i-\cod\Sigma=i-2$.

\begin{theor}\label{th3}

\indent

In the above notation $\pi\colon\Sigma^i\rightarrow\Sigma_0$ is a vector bundle $\forall\ i=2,\ldots, |A|-\dim\mathcal{V}^A_B-1$. Its total space has dimension $\dim\Sigma_0+|A|-i=\sum|B_i|+|A|-i+1=n|B|+|A|-i+1$. 
\end{theor}

\begin{proof}
It is sufficient to find a trivialising neighbourhood for each $(p, f)\in\Sigma_0$. Let $(p, f)\in\Sigma_0$. Consider the Jacobian matrix of the system $f=0$ where $f_i$ is considered as a polynomial in variables $z_i$ and $f_i^{b_i^j}$. It has the following form
$$
\begin{bmatrix}
J\vert\ldots
\end{bmatrix}.
$$
where $J$ is the Jacobian matrix of $f$ with respect to the variables $z_i$. That means that $\Sigma_0$ lies in the smooth locus of $\overline{\Sigma}_0$. Pick a holomorphic parametrization of the point $(p, f)\in\Sigma_0$ in some open neighbourhood of $U\subset\mathbb{C}^{\sum|B_i|+1}$. Consider the embedding of the curve $f=0$ into $\mathbb{CP}^{|A|-1}$ via the map $m_A$. Denote $p=p(\omega), \omega\in U$ as a holomorphic parametrization of $p$ in $\Sigma_0$. Under assumption $m_{A}(p)\in \Gamma$ is smooth and non Weierstrass. That means $W_{m_A}(\omega)$ is not vanishing in some neighbourhood of $(p, f)$, probably smaller than $U$. Choose the first $i$ rows of the Wronskii matrix as a matrix of a system of linear equations. This system has maximal rank and thus its set of solution depends on $|A|-i$ free parameters $(c_{i+1},\ldots, c_{|A|})$. Now the desired trivialisation is given by the map 
$$
s_U\colon\pi^{-1}(U)\rightarrow U\times\mathbb{C}^{|A|-i}
$$
$$
(p, f, g)\mapsto(\omega, c_{i+1},\ldots, c_{|A|})
$$
\end{proof}

This theorem shows that we have a length $k$ descending chain of strata of consecutive dimensions in $\Sigma$, such that every system in the dense subset of $i$-th stratum has a root of multiplicity exactly $i$. We will use it now to conclude the section.

\begin{proof}(Of theorem \ref{STR})
In the construction above, put $S^0_i=\Sigma^i$. By the definition of $\Sigma^i$ and theorem \ref{th3}, all $S^0_i$ have expected consecutive dimensions and are subsets of $\Sigma$. Since $D\colon\Sigma\rightarrow\mathbb{C}^{B}\times\mathbb{C}^{A}$ has a finite set of points in each fibre, $\dim\Sigma^i=D(\Sigma^i)$. To finish the proof, we define $S_i$ to be $D(\Sigma^i)$.
\end{proof}

\section{\bf Idea behind the proof of the main result.}\label{mex}
    Let us denote the standard 2-simplex by $E$. Let $f_1, f_2$ be two generic polynomials of degree $d_1, d_2$ respectively. Let us find $\dim\mathcal{V}_A^{f}$, where $f=(f_1, f_2)$ and $A=E$.
    Let $c=(c_1, c_2)$ where $c_1, c_2$ are polynomials of degree $D_1, D_2$ respectively, so $(c_1, c_2)\in\mathbb{C}^{D_1 E}\oplus\mathbb{C}^{D_2 E}$. Our goal is to understand for what values of $D_1, D_2$ the corresponding pair $C=(D_1 E, D_2 E)$ satisfies $\mathcal{V}_A^{f}=\mathcal{V}_A^{C,\ f}$.

    We consider the following map$\colon$

    $$
    \phi\colon\mathbb{C}^{D_1 E}\oplus\mathbb{C}^{D_2 E}\rightarrow\mathbb{C}[x, y]
    $$
    $$
    (c_1, c_2)\mapsto c_1 f_1+c_2 f_2
    $$

On the level of linear algebra we are looking for $(c_1, c_2)$ such that $c_1 f_1+c_2 f_2$ is nonzero and has no monomials of degree higher than $1$. For arbitrary linear map $\psi\colon V\rightarrow U$ and $W\subset V$ a linear subspace of $V$ given by a system of linear equations with matrix $\Omega$, we have the following simple formula$\colon$

$$
\dim\im\psi\vert_W =\dim V-\rk\Omega-\dim(W\cap\ker\psi)
$$

To apply this formula to our problem we put $W=\{(c_1, c_2)\vert \deg(c_1 f_1+c_2 f_2)\leq1\}$. And $\psi=\phi$ defined above. In our case $\ker\phi\subset W$ since a zero polynomial has degree lower than 1. So

\begin{equation}\label{eq2}
    \dim\mathcal{V}_A^{C, f}=\dim V-\rk\Omega-\dim\ker\phi
\end{equation}

First, we are ready to calculate $\ker\phi$. This is the space of polynomials $(c_1, c_2)$ such that $c_1 f_1+c_2 f_2=0$. We use the fact that $f_1, f_2$ are generic and thus co-prime elements of the polynomial ring. That means we can put $c_1=\lambda f_2$, $c_2=\mu f_1$ and substituting it to the equation we obtain $(c_1, c_2)=\lambda(f_2, -f_1)$. The last question is what we can say about $\lambda$? We can easily show using proposition \ref{prop1} that $\lambda$ is any polynomial in the space $\mathbb{C}^{D_1 E\ominus d_2 E}$ and simultaneously any polynomial in $\mathbb{C}^{D_2 E\ominus d_1 E}$.
It is easy to see that $D E\ominus d E=(D-d)E$. So, we see that $\lambda$ is an arbitrary polynomial of degree $\leq\min(D_1-d_2, D_2-d_1)$. 

Next, let us calculate $\dim W$. Subspace $W$ is given by the system of linear equations. Each one corresponds to a monomial in $\supp c_i f_i$ that lies outside of $A$. We denote the matrix of conditions by $\Omega$ just like in the formula above. For convenience we enumerate rows of $\Omega$ by vanishing monomials and columns by coefficients in $(c_1, c_2)$. For example, if $g=(1+x)(c_0+c_1 x+c_2 x^2)$ and $g$ is of degree 1 then $\Omega$ has the following form$\colon$

$$
\Omega=\begin{bmatrix}
 0   &  1 & 1\\
0  &  0 & 1
\end{bmatrix}.
$$

In practice it is really difficult to get any information from $\Omega$. We can partially avoid the complexity by considering variation of $\dim\mathcal{V}_A^{C,\ f}$ instead of using the direct approach. By \ref{eq2} we have$\colon$

$$
\delta\dim\mathcal{V}_A^{C,f}=\delta\dim V-\delta\rk\Omega-\delta\ker\phi
$$
where $\delta\dim\mathcal{V}_A^{C,f}=\delta\dim\mathcal{V}_A^{C_2,f}-\delta\dim\mathcal{V}_A^{C_1,f}$, $C_2=((D_1+1)E,(D_2+1)E),\ C_1=(D_1 E, D_2 E)$.

As we can see 
\begin{equation}\label{eq3}
\Omega_2=\left[\begin{array}{c|c c} 
	\Omega_1 & *\\ 
	\hline 
	0 & \delta\Omega
\end{array}\right]
\end{equation}

Where $\Omega_1$ is the matrix of conditions corresponding to $C_1=(D_1 E, D_2 E)$. We obtain new columns because $C_2$ contains an additional face and for the same reason we obtain additional rows. We have zero block under $\Omega_1$ because obviously the previous set of coefficients is not involved into new conditions obtained from new points in $C_2$. We can see from figure \ref{exf3} that additional degree leads to additional degree in the product $f_i c_i$ so all new conditions correspond to the new face.

\begin{figure}[h!]
\includegraphics[width=0.5\linewidth]{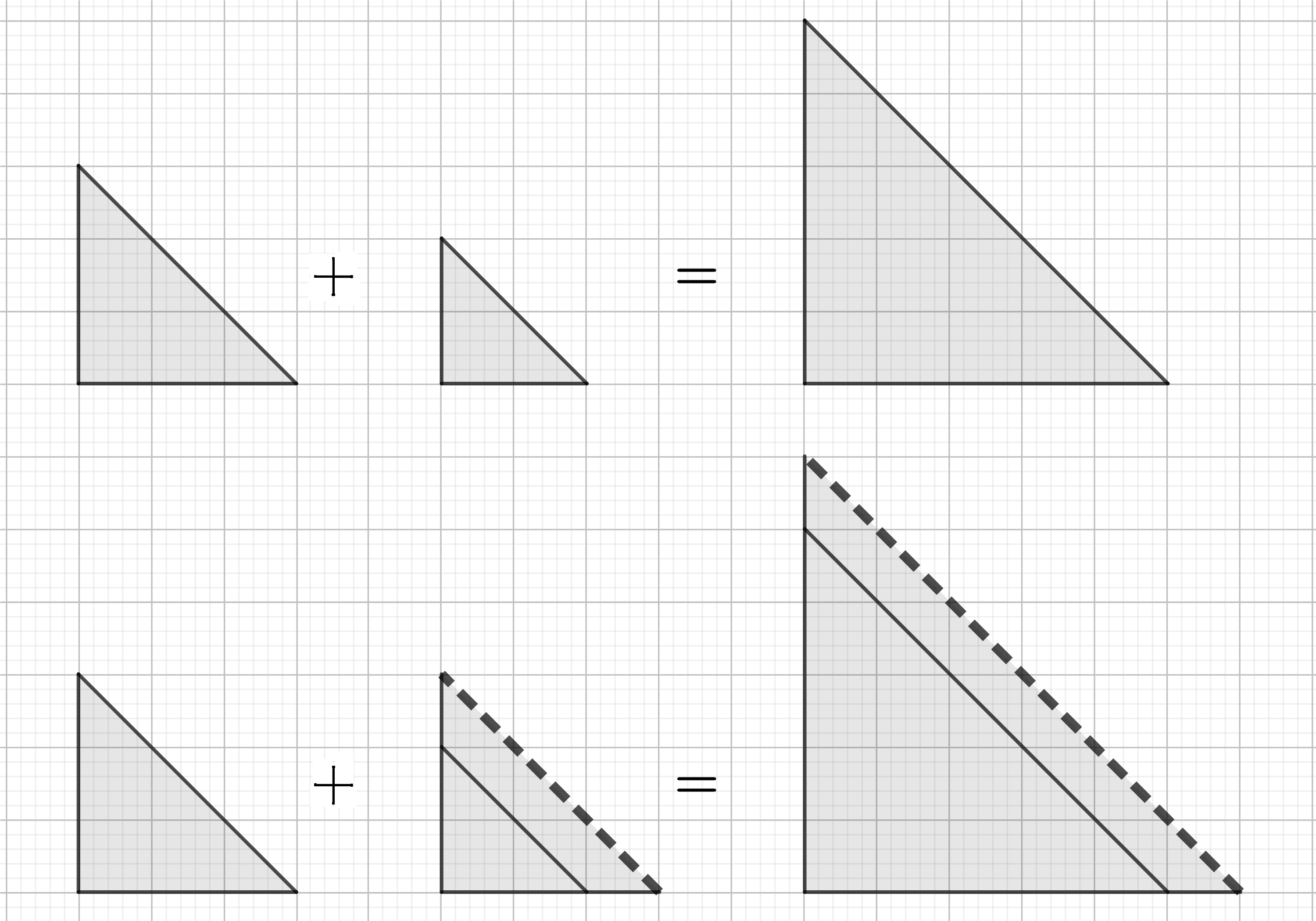}
\caption{new face in $C$ and in $C+B$}
\label{exf3}
\end{figure}
In $\Omega_2$ we can consider $\delta\Omega$ in more detail. Note that it is located in the intersection of new rows and columns. Since all new conditions are located in the common face we can group up all linear equations in $\delta\Omega$ and obtain single algebraic equation$\colon$

$$\delta c_1 f^\delta_1+\delta c_2 f^\delta_2=0$$

Where $f_i^\delta=f_i\vert_{d_i E(-1,-1)}$. We deduce that $(\delta c_1, \delta c_2)=\mu(f_2^\delta, -f_1^\delta)$, $\mu\in\mathbb{C}^{D_1 E(-1,-1)\ominus d_2 E(-1,-1)}\cap\mathbb{C}^{D_2 E(-1,-1)\ominus d_1 E(-1,-1)}$.
If $D_1+d_1>D_2+d_2$ then coefficients of $\delta c_1$ will automatically vanish. So without loss of generality we assume that $D_1+d_1=D_2+d_2$. And thus $\mu\in\mathbb{C}^{D_1-d_2+1}=\mathbb{C}^{D_2-d_1+1}$. 

For matrices of the form \ref{eq3} we have $\rk\Omega_2\geq\rk\Omega_1+\rk\delta\Omega$. Thus, $\delta\rk\Omega=\rk\Omega_2-\rk\Omega_1\geq\rk\delta\Omega$ and $-\delta\rk\Omega\leq-\rk\delta\Omega$. We obtain, by the dimension theorem for vector spaces $\delta\dim V-\rk\delta\Omega=\dim\mathbb{C}^{D_2-d_1+1}$. We have computed all terms we needed. Finally, we substitute all data into the variation of $\delta\dim\mathcal{V}_A^{C, f}$

$$
\delta\dim\mathcal{V}_A^{C, f}=\delta\dim V-\delta\rk\Omega-\delta\ker\phi\leq
$$

$$
\leq\delta\dim V-\rk\delta\Omega-\delta\ker\phi=
$$

$$
=D_1-d_2+2-(D_1-d_2+2)=0.
$$
We can see that by increasing degrees of $C$ we make no affect on $\dim\mathcal{V}_A^{C, f}$. Without loss of generality assume that $d_1\leq d_2$. The minimal $(D_1, D_2)$ satisfying the property $D_1+d_1=D_2+d_2$ is $(d_2-d_1,0)$. We will take it from here.

\begin{enumerate}
    \item $d_2>d_1\geq2$. 

Let us find all linear $l$ that can be represented in the form $l=c_1 f_1+c_2 f_2$, $c_2\in\mathbb{C}^*$. If we restrict the system to the face $E(-1,-1)$, as before, we obtain that $f_2\vert_{d_2 E(-1,-1)}$ is divisible by $f_1\vert_{d_1 E(-1,-1)}$. And this leads to a contradiction. So, in this case $\dim\mathcal{V}_A^{f}=0$.
    
    \item $d_2>d_1=1$. In a similar way we obtain $l=c f_1$, $c\in\mathbb{C}$. So $\dim\mathcal{V}_A^f=1$
    \item We easily obtain $\dim\mathcal{V}_A^f=2$.
\end{enumerate}

In section \ref{ga3} we will generalize this method to arbitrary dimension $n$ for $2$ bodies in the ring of Laurent polynomials.

\section{\bf Existence and properties of normal chains of a convex polytope.}\label{NC}

\subsection{Chains of positive type.}

In the previous section we demonstrated technique of `moving facets' of $C$ to find the variation of $\dim\mathcal{V}_A^{C, f}$ and finally conclude that it is zero. The main reason we could do this is that $C$ and $B$ had the same shape and any element of $C_{(-1,-1)}\ominus B_{(-1,-1)}$ can be extended to the element of $C\ominus B$. We would like to generalize this process to convex polytopes which are more complex than the unit simplex.  

Let $X\subset\mathbb{R}^n$ be a convex polytope of maximal dimension. Let $F$ be the number of its facets. We can represent $X$ as a set of solutions of a system of $F$ inequalities $x.\alpha_i\leq \beta_i$, $i\in\{1,\ldots, F\}$. If $h_O^\epsilon\colon\mathbb{R}^n\rightarrow\mathbb{R}^n$ is a fixed homothety with the center in $O$ and factor $\epsilon$, we will denote $h_O^\epsilon(X)$ by $\epsilon X$ for short. In the case when $O$ is the origin and it lies in the interior of $X$, $\epsilon X$ can be represented in the form $x.\alpha_i\leq\epsilon\beta_i$, $i\in\{1,\ldots, F\}$. Everywhere we assume that the center of homothety is always in the origin since it is more convenient for notation and computations.  

\begin{defin}\label{def1}
For a given convex polytope $X$ given by the set of inequalities $\{x.\alpha_i\leq \beta_i\}_i=1^F$ we will say that the chain of convex polytopes 

$$
 X\subset\epsilon_0 X\subset X_1\subset X_2\subset\ldots\subset X_{jL+i}\subset\ldots
$$
    is a positive normal chain if we have the following$\colon$

\begin{enumerate}
    \item for $\epsilon_0>1$ we have $\mathbb{Z}(\epsilon_0X)=\mathbb{Z}(X)$.
    \item for all $j\in\mathbb{N}$ we have $X_{jF}=(\epsilon_0+js)X$ for some $s>0$. 
    \item for all $i\in\{1,\ldots, F-1\}$ and for all $k\in\mathbb{N}$, $X_{jF+i}$ is given by the system $$\{x.\alpha_1\leq (\epsilon_0+js+s)\beta_1, \ldots,\ x.\alpha_i\leq (\epsilon_0+js+s)\beta_i,\ x.\alpha_{i+1}\leq (\epsilon_0+js)\beta_{i+1},$$ $$\ldots,\ x.\alpha_{F-1}\leq (\epsilon_0+js)\beta_{F-1}\}$$  
    \item $\cup_1^{\infty} X_k=\mathbb{R}^n$.
    \item for any $k\in\mathbb{N}$ from the chain we have $X_{k}=X+(X_k\ominus X)$. 
    \item For any $k\in\mathbb{N}$ we have $\mathbb{Z}(X_{k+1}\setminus X_{k})$ is either empty or lies in a hyperplane.
\end{enumerate}

We denote positive normal chains $X\subset X^{+}_*$ for short.

\end{defin}

We would like to give some clarification. Except the second one, each term $X_k$ in the chain is derived from the previous one $X_{k-1}$ by shifting one of its facets. We require that such deformations do not change the set of faces of $X_{k-1}$, and when all faces are used, we end up with a pure homothety of $X$. 

The most interesting part of this definition is how we keep the property (5) satisfied along the normal chain. Once it is achieved all other properties will be satisfied automatically. The reality is that we don't have to pick $X$ as a starting point of a normal chain, any convex polytope $Y$ with the property $Y=X+(Y\ominus X)$ fits.

\begin{defin}
    Let $Y$ be given by the system of linear inequalities $\{\alpha_i.x\leq b_i\}_{i=1}^{N}$ where $N$ is the number of facets of $Y$. We will denote $\{\alpha_i.x\leq tb_i(1+s_i)\}_{i=1}^{F}$ by $X(t,s)$ for any $s=(s_1,\ldots,s_i,\ldots, s_F)$, where $F$ is the number of facets of $X$. We will say that $Y$ is stably $X$ decomposable if there exists $\epsilon>0$ such that $Y(1,s)=X+(Y(1,s)\ominus X)$ for all $s_i$ from the range $0<s_i<\epsilon$.
\end{defin}

It is not very difficult to give an example of a couple $X, Y$ such that $Y$ is $X$ decomposable but not stably decomposable \ref{fig2}.

\begin{exa}
Here $X=Y$ and $s=(0,0,0,\delta)$ for some small $\delta$, since we have $4$ facets and we move only one of them.

\begin{figure}[h!]
  \includegraphics[width=0.7\linewidth]{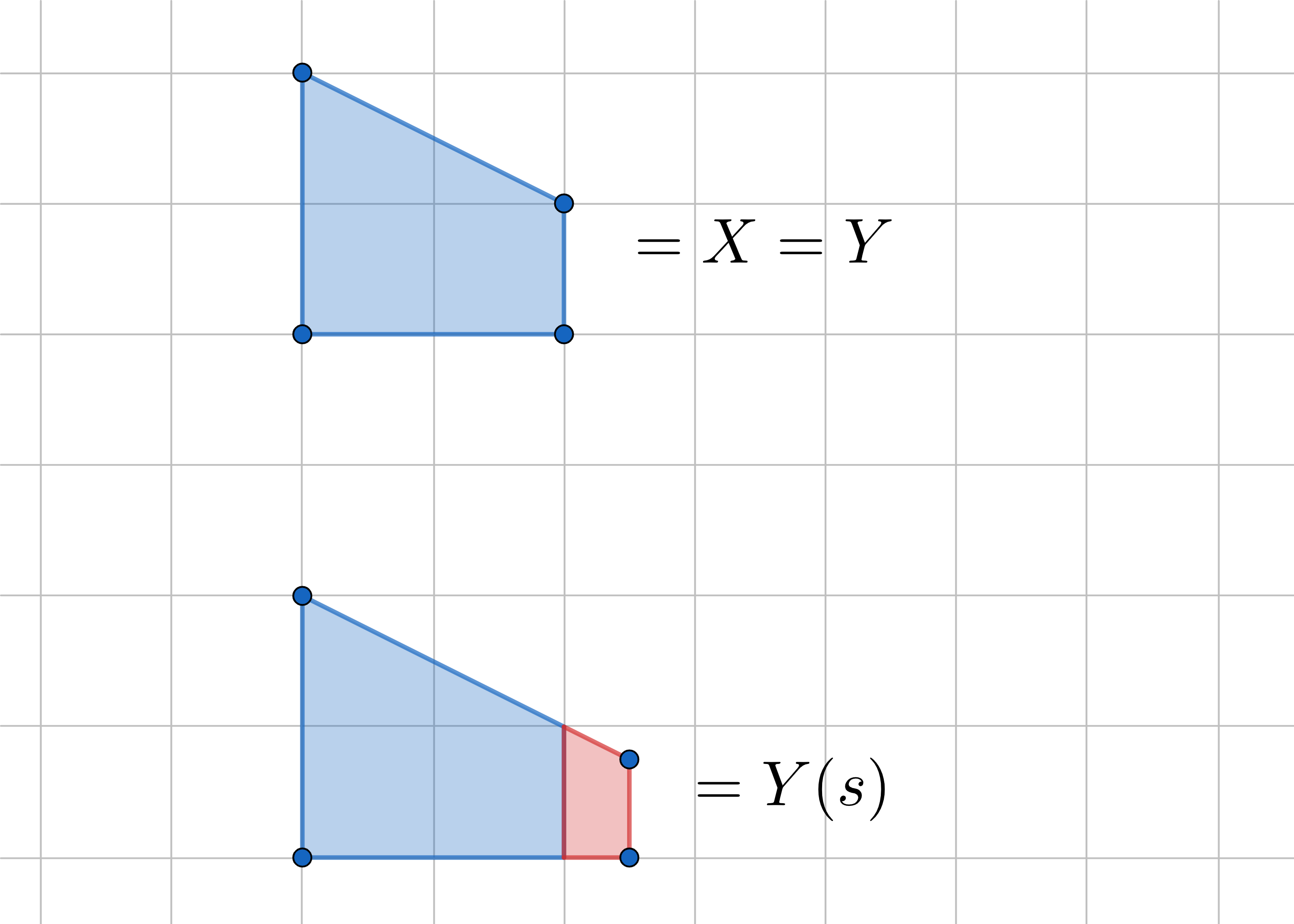}
  \caption{non-stable $Y$}\label{fig2}
\end{figure}    
\end{exa}

What we are looking for is $\colon$

\begin{theor}\label{TH}
    Let $X, Y\subset\mathbb{R}^n$ and $Y$ is $X$ decomposable, then $tY$ is stably $X$ decomposable for any $t>1$. Moreover, if $Y(t,s)$ is $X$ decomposable then $Y(T,s)$ is also $X$ - decomposable for any $T>t$.
\end{theor}

We will prove this result by using the so-called support functions. The main geometric idea we will use is convexity. If for each vertex of $Y$ we could find a decomposition $v=u+y$ for some $x\in X$ and $y\in Y\ominus X$, then any other point of $Y$ can be represented in a similar way because $\alpha v_1+(1-\alpha)v_2=(\alpha x_1+(1-\alpha)x_2)+(\alpha y_1+(1-\alpha)y_2)$ and any point in $Y$ can be represented as a convex combination of some vertices.

Now, let $X,Y\subset\mathbb{R}^n$ such that $Y$ is $X$ decomposable. We want to show that $tY$ is stably $X$ decomposable for $t>1$. We start from the description of $A\ominus B$ for a polytope $A$ and more or less arbitrary $B$.

\begin{defin}
    Let $B$ be a subset of $\mathbb{R}^n$. For a given $\alpha\in\mathbb{R}^n$ we define the support function$\colon$
$$
h_B(\alpha)=\sup_{x\in B}\alpha.x
$$
    
\end{defin}

If for each facet $F$ of $A$ with the corresponding normal vector $\alpha$ we can define the support function $h_B(\alpha)$ then it turns out that $A\ominus B$ is also a polytope.

\begin{theor}(T2.3 from \cite{G})\label{ominus}
    Let $A$ be defined by the system of inequalities $\colon$

$$
A=\{\alpha_i.x\leq\beta_i\}
$$

For simplicity we assume that $i$ enumerates the set of facets. For each $i$ let $h_B(\alpha_i)$ be well defined. Then 

$$
A\ominus B=\{\alpha_i.x\leq\beta_i-h_B(\alpha_i)\}
$$

\end{theor}

We are two propositions away from proving theorem \ref{TH}.

\begin{utver}\label{P_2}
    If $Y$ is $X$ decomposable then for any vertex $v$ of $Y$ there exists a vertex $u$ of $X$ such that $v=u+y$ for some $y\in Y\ominus X$. 
\end{utver}

\begin{proof}
    Since $Y=X+(Y\ominus X)$ for any vertex $v$ of $Y$ we can provide a decomposition $v=u+y$ where $u\in X$ and $y\in Y\ominus X$. Let $u$ be an interior point of some face of $X$. Then there exist $a,b\in X$ such that $u\in\inter[a,b]\subset X$. Since $[a,b]\subset X$ and $y\in Y\ominus X$, we have $[a,b]+y\subset Y$. But $v=x+y$ is the interior point of the segment $[a,b]+y$ which is impossible because $v$ is a vertex of $Y$.
\end{proof}

and

\begin{utver}\label{P_3}
    If $Y$ is $X$ decomposable then $tY$ is $X$ decomposable.
\end{utver}

\begin{proof}
    We represent $Y$ as the set of solution of the system of inequalities $Y=\{\alpha_i.x\leq\beta_i\}_{i=1}^F$. For any vertex $v$ of $Y$ there exists a vertex $u$ of $X$ such that $v=u+y$ for some $y\in Y\ominus X$ by proposition \ref{P_2}. We want to show that $tv-u\in tY\ominus X$. By \ref{ominus} we know that $tY\ominus X=\{\alpha_i.x\leq t\beta_i-h_X(\alpha_i)\}_{i=1}^{F}$. Since $Y=X+(Y\ominus X)$ we know that $\alpha_i.(v-u)\leq\beta_i-h_X(\alpha_i)$. Since $v$ is a vertex of $Y$ we know that $\alpha_i.v\leq\beta_i\Rightarrow(t-1)\alpha_i.v\leq(t-1)\beta_i$ for $t>1$. From

    $$
    \alpha_i.(v-u)\leq\beta_i-h_X(\alpha_i)
    $$
    $$
    \alpha_i.(t-1)v\leq(t-1)\beta_i
    $$
    We deduce $\alpha_i(tv-u)\leq t\beta_i-h_X(\alpha_i)$ by side-wise addition. That is exactly what we wanted 
 \end{proof}

Finally, we are ready to prove the main result of the course project.

\begin{proof}(of  theorem  \ref{TH})
    Let $Y$ be given by the system of inequalities $Y=\{\alpha_i.x\leq\beta_i\}^{F}_{i=1}$ where $F$ is the number of facets of $Y$. Since $Y$ is not necessarily simple, some vertices of $Y$ meet more than $n$ facets. Let $v$ be one of the vertices of $Y$ and $v(t,s)$ be one of the vertices appeared after the splitting of $tv$. So that $v(t,0)=tv$. By proposition \ref{P_2} we can find a vertex $u$ of $X$ and $y\in Y\ominus X$ so that $v=u+y$. We want to show that $v(t,s)-u\in Y(t,s)\ominus X$. We remind the reader that $Y(t,s)$ can be represented in the form $\{\alpha_i.x\leq tb_i(1+s_i)\}_{i=1}^{F}$. We consider two possible cases$\colon$

\begin{enumerate}
    \item $v$ lies on the hyperplane $\alpha_i.x=\beta_i$. In this case $\alpha_i.v=\beta_i$ and since $v-u\in Y\ominus X$ we have $\alpha_i.(v-u)\leq\beta_i-h_X(\alpha_i)\Rightarrow-\alpha_i.u\leq-h_X(\alpha_i)$. From

$$
-\alpha_i.u\leq-h_X(\alpha_i)
$$
$$
\alpha_i.v(t,s)\leq t\beta_i(1+s_i)
$$

We deduce $\alpha_i.(v(t,s)-u)\leq t\beta_i(1+s_i)-h_X(s_i)$. Thus $v(t,s)-v\in Y(t,s)\ominus X$.

    \item $v$ does not lie on the hyperplane $\alpha_i.x=\beta_i$, so actually $\alpha_i.v<\beta_i$ because $v$ is still a vertex of $Y$. From 
    $$
    \alpha_i.(t-1)v<(t-1)\beta_i
    $$
    $$
    \alpha_i.(v-u)\leq\beta_i-h_X(\alpha_i)
    $$
 We deduce $\alpha_i.(tv-u)<t\beta_i-h_X(\alpha_i)$. Now, consider 

$$
L(t,s)=\alpha_i.(v(t,s)-u)
$$
$$
R(t,s)=t\beta_i(1+s_i)-h_X(\alpha_i)
$$
 Since both $L(t,s), R(t,s)$ are continuous in some neighborhood of zero and $L(0)<R(0)$, we deduce that for sufficiently small $\epsilon>0,\ L(t,s)< R(t,s)$ for all $s$ such that $|s|<\epsilon$. Moreover, we can see that $R(t,0)-L(t,0)=t\beta_i-h_X(\alpha_i)-\alpha_i(tv-u)=t(\beta_i-\alpha_i.v)-h_{X}(\alpha_i)+\alpha_i.u$, which means if $2\epsilon<R(t,0)-L(t,0)$ then $2\epsilon<R(T,0)-L(T,0)$ for any $T>t$ which finishes the second part of the proof.
\end{enumerate}
    We have shown that $v(t,s)-u\in Y(t,s)\ominus X$ for sufficiently small $s$.
\end{proof}

We need to start a normal chain of $X$ from a homothety which includes the same set of integral points as $X$ does as the definition requires.

\begin{lemma}\label{Lemma1}
    Let $X$ be an arbitrary polytope with integral vertices. Then there exists $\epsilon_0>1$ such that $\mathbb{Z}(\epsilon_0 X)=X$.
\end{lemma}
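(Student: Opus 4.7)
The plan is to leverage the integrality of the defining data of $X$: an integer lattice point outside $X$ must violate some facet inequality by an integer amount (hence by at least $1$), while a mildly enlarging dilation with ratio $\epsilon_0$ slightly larger than $1$ shifts each facet outward by strictly less than that integrality gap. The argument will be essentially a quantitative form of the discreteness of $\mathbb{Z}^n$.

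First, I would use the standing assumption $O\in\inter(X)$ together with the integrality of the vertices of $X$ to put $X$ in the form $X=\{x\in\mathbb{R}^n : x\cdot a_i\geq b_i,\ i=1,\ldots,L\}$, where each $a_i$ is the primitive integer inward normal to the facet $F_i$ and $b_i=\min_{x\in X}x\cdot a_i$ is attained at an integer vertex of $F_i$. This forces $b_i\in\mathbb{Z}$; combined with $0>b_i$ (the origin is interior), this gives $b_i\in\mathbb{Z}_{\leq-1}$, so $|b_i|\geq 1$. Under these normalisations, $\epsilon X=\{x : x\cdot a_i\geq \epsilon b_i\}$ for every $\epsilon>0$, matching the convention used elsewhere in the section.

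Next, I would record the integrality gap: for any $p\in\mathbb{Z}^n\setminus X$ there exists an index $i$ with $p\cdot a_i<b_i$, and since both $p\cdot a_i$ and $b_i$ are integers, the strict inequality upgrades to $p\cdot a_i\leq b_i-1$. With this in hand, the lemma follows immediately by choosing any $\epsilon_0\in\bigl(1,\, 1+\min_i 1/|b_i|\bigr)$, a non-empty interval because the minimum is taken over finitely many positive rationals. A one-line calculation ($|b_i|(\epsilon_0-1)<1$ rearranges to $\epsilon_0 b_i>b_i-1$) then shows that for each $p\in\mathbb{Z}^n\setminus X$ the facet witnessing $p\cdot a_i\leq b_i-1$ also certifies $p\cdot a_i<\epsilon_0 b_i$, so $p\notin\epsilon_0 X$. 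Together with the trivial containment $\mathbb{Z}(X)\subseteq\mathbb{Z}(\epsilon_0 X)$ coming from $X\subseteq\epsilon_0 X$, this yields $\mathbb{Z}(\epsilon_0 X)=\mathbb{Z}(X)$.

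I do not foresee a genuine obstacle; the only piece of structure really needed is that the bounds $b_i$ in the facet presentation of $X$ can be taken to be integers, and this is guaranteed by the hypothesis that the vertices of $X$ lie in the lattice.
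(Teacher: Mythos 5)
Your proof is correct and takes a genuinely different route from the paper's. The paper argues by soft finiteness: it introduces the counting function $N(\epsilon)=|\mathbb{Z}(\epsilon X)|$, observes it is monotone, and rules out the case that $N(\epsilon)>N(1)$ for every $\epsilon>1$ by noting that all candidate lattice points lie in the finite set $\mathbb{Z}(2X)\setminus X$, while $\bigcap_{\epsilon>1}\epsilon X=X$ is closed, so no single lattice point can witness the strict inequality for all $\epsilon$ near $1$. That argument is non-constructive and, notably, never invokes integrality of the vertices; it works for any bounded convex body with $O$ interior. Your argument, by contrast, leans directly on the lattice structure: the facet presentation with primitive integer inward normals $a_i$ and integer right-hand sides $b_i$ (valid because the vertices are integral, and $b_i\le -1$ since $O\in\inter X$), the integrality gap $p\cdot a_i\le b_i-1$ for a lattice point outside $X$, and the explicit choice $\epsilon_0\in\bigl(1,\,1+\min_i 1/|b_i|\bigr)$. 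What your version buys is an explicit, effective $\epsilon_0$ expressed in terms of the facet data, whereas the paper's version is shorter and applies in broader generality but gives no quantitative control on $\epsilon_0$.
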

\begin{proof}
    Consider the function 
    
    $$N\colon[1,\infty)\rightarrow\mathbb{N}$$

    $$\epsilon\mapsto\vert\epsilon X\cap\mathbb{Z}^n\vert$$
It is clear that $t_2 X\subset t_1 X$ for $1<t_2<t_1$ thus $N(t_1)\geq N(t_2)$. We have two possible options
\begin{enumerate}
    \item for all $\epsilon$ in some neighbourhood $U$ of $1$ we have $N(\epsilon)=\vert X\cap\mathbb{Z}^n\vert$
    \item $N(1)=\vert X\cap\mathbb{Z}^n\vert$, but for any $\epsilon>1$ we have $N(\epsilon)>\vert X\cap\mathbb{Z}^n\vert$.
\end{enumerate}

In the first case we can take any $\epsilon\in U\setminus 1$ and finish the proof. In the second case for any $t>1$ there exists a point $p(t)\in\mathbb{Z}^n$ such that $p(t)\in tX\setminus X$. We obtain a contradiction since there are only finite number of integral points in $t X\setminus X$ for any finite $t$.
\end{proof}

Let us take $\epsilon_0$ like in the theorem above. Since $X$ is $X$ - decomposable, we have that $\epsilon_0 X$ is stably $X$ - decomposable, hence there exists $\epsilon>0$ such that $X(\epsilon_0,s)$ is $X$ decomposable for any $s$ with the property $|s|<\epsilon$. Such epsilon exists by the theorem \ref{TH}. Now, to construct the normal chain starting from $X$ we define 

$$X_{jF+i}=X(\epsilon_0+j\epsilon/2,s_1,\ldots,s_i,0,\ldots,0)$$ where $s_i=\epsilon/2$ for all $i\in\{1,\ldots,F\}$. Let us double-check all the properties we require from the normal chain$\colon$

\begin{enumerate}
    \item Satisfied by the definition of $\epsilon_0$ and \ref{Lemma1}
    \item We have $X_{jL}=X(\epsilon_0+j\epsilon/2,0,\ldots,0)=(\epsilon_0+j\epsilon/2)X$.
    \item Satisfied by the definition of $X_{jF+i}$
    \item Since $(\epsilon_0+j\epsilon/2)X\subset X_{jF+i}$ and $j$ is unbounded, this property is satisfied.
    \item By the \ref{TH} this property is satisfied.
    \item Can be easily achieved by taking $\epsilon$ small enough.
\end{enumerate}

The most valuable property of normal chains is that moving along them commutes with taking the Pontryagin difference in the sense that will be discussed in the next section. 
\subsection{Chains of negative type.}
We will also need another type of chains whose elements possess similar properties. Since we have fixed the order of facets, we will denote the $i$-th facet of $X$ by $X^i$.

\begin{defin}\label{def2}

For a given convex polytope $X$ we will say that the chain of convex polytopes 

$$
 X_{JL}\subset X_{JL+1}\subset\ldots\subset X_{jL+i}\subset\ldots\subset X_{-L+L-1}\subset \epsilon_{0}X\subset X
$$
    is a negative normal chain if we have the following$\colon$

\begin{enumerate}
    \item for all $\epsilon$ such that $1>\epsilon>\epsilon_0$ we have $\mathbb{Z}(\epsilon X)=\mathbb{Z}(\epsilon_0 X)$.
    \item for some $J\in-\mathbb{N}$ and for some $s\in\{0,\epsilon_0/|J|\}$ for all $j\in\{J,\ldots,-1\}$ we have $X_{jF}=(\epsilon_0+js)X$. 
    \item for all $i\in\{1,\ldots, F-1\}$ and for all $j\in\{J,\ldots,-1\}$, $X_{jF+i}$ is given by the system $$\{x.\alpha_1\leq (\epsilon_0+js+s)\beta_1, \ldots,\ x.\alpha_i\leq (\epsilon_0+js+s)\beta_i,\ x.\alpha_{i+1}\leq (\epsilon_0+js)\beta_{i+1},$$ $$\ldots,\ x.\alpha_{F-1}\leq (\epsilon_0+js)\beta_{F-1}\}$$  
    \item for any $k\in\{JF,\ldots,-1\}$ and $i\in\{1,\ldots, F\}$ we have $X^{i}_{k}\ominus X^{i}=\varnothing$. 
    \item For all $k\in\{JF,\ldots,-1\}$ we have $\mathbb{Z}(X_{k+1}\setminus X_{k})$ is either empty or lies in a hyperplane.
\end{enumerate}

We denote negative normal chain $X_*^{-}$ for short.

\end{defin}

For any $J\in-\mathbb{N}$ we can achieve the desirable construction by taking $s\in(0,\epsilon_0/|J|)$ small enough so that (s) would be satisfied and

$$
X_{jF+i}=X(\epsilon_0+j\epsilon/2,s_1,\ldots,s_i,0,\ldots,0)
$$
where $s_i=s$ for all $i\in\{1,\ldots,F\}$. 

We only need to verify property (4). To do this, we prove the following proposition$\colon$

\begin{utver}\label{utver_0}
Let $X$ be defined by the system of inequalities $\{x.\alpha_i\leq \beta_i\}_{i=1}^F$ and let $s=(s_1,\ldots,s_F)$, $1>\epsilon, \epsilon_0>0$ be such that $X(\epsilon,s)\subset X(\epsilon_0,0)\subset X$. Then        
$$
X^{i}(\epsilon,s)\ominus X^{i}=\varnothing
$$
    
\end{utver}
\begin{proof}
    By the formula \ref{ominus} we know that $X^{i}(\epsilon,s)\ominus X^{i}$ is given by the set of inequalities

    $$
    x.\alpha_k\leq(\epsilon+s_k)\beta_k-\beta_k,\ k\neq i
    $$
    $$
    x.\alpha_i=(\epsilon+s_i)\beta_i-\beta_i
    $$
Since $\{\alpha_k\}_{k=1}^F$ generates $\mathbb{R}^{n}$ as a cone and $\epsilon+s_k-1<\epsilon_0-1<0$ this system is impossible to satisfy. Indeed, we can represent $x$ as a positive linear combination of $\{\alpha_k\}_{k=1}^F$, that is $x=\sum\lambda_k\alpha_k$ which means $x.x=\sum\lambda_k \alpha_k.x<0$            
\end{proof}

\subsection{\bf Chains of intermediate type.}

It only remains to understand what is happening between the terms $\epsilon_0 X$ and $X$ as we shift the facets of $\epsilon_0 X$ toward those of $X$. To do this we introduce the third part of the definition.

\begin{defin}\label{def3}

For a given convex polytope $X$ we will say that the chain of convex polytopes 

$$
 \epsilon_{0}X\subset X(\epsilon_0,s_1,\ldots,0)\subset\ldots\subset X(\epsilon_0,s_1,\ldots,s_i,0,\ldots,0)\subset\ldots\subset X(\epsilon_0,s_1,\ldots,s_{F-1},0)\subset X
$$
    is an intermediate normal chain if we have the following$\colon$

\begin{enumerate}
    \item for all $\epsilon$ such that $1>\epsilon>\epsilon_0$ we have $\mathbb{Z}(\epsilon X)=\mathbb{Z}(\epsilon_0 X)$.
    \item All $s_i=1-\epsilon_0$.
    \item for any $k\in\{JF,\ldots,-1\}$ and $i\in\{1,\ldots, F\}$ we have $X^{i}_{k}\ominus X^{i}=\varnothing$. 
    
\end{enumerate}

We denote intermediate normal chain $X_*^{0}\subset X$ for short.

\end{defin}

Note that the definition is not trivial although it looks very similar to what we worked on in the previous sections. The reason for that is that the order of deformations becomes important when we shift from $\epsilon_0 X$ to $X$ instead of the smaller homothety of $X$. 

\begin{exa}
This example shows us that for some order of deformation propert (3) might be violated.   

\begin{figure}[h!]
  \includegraphics[width=0.8\linewidth]{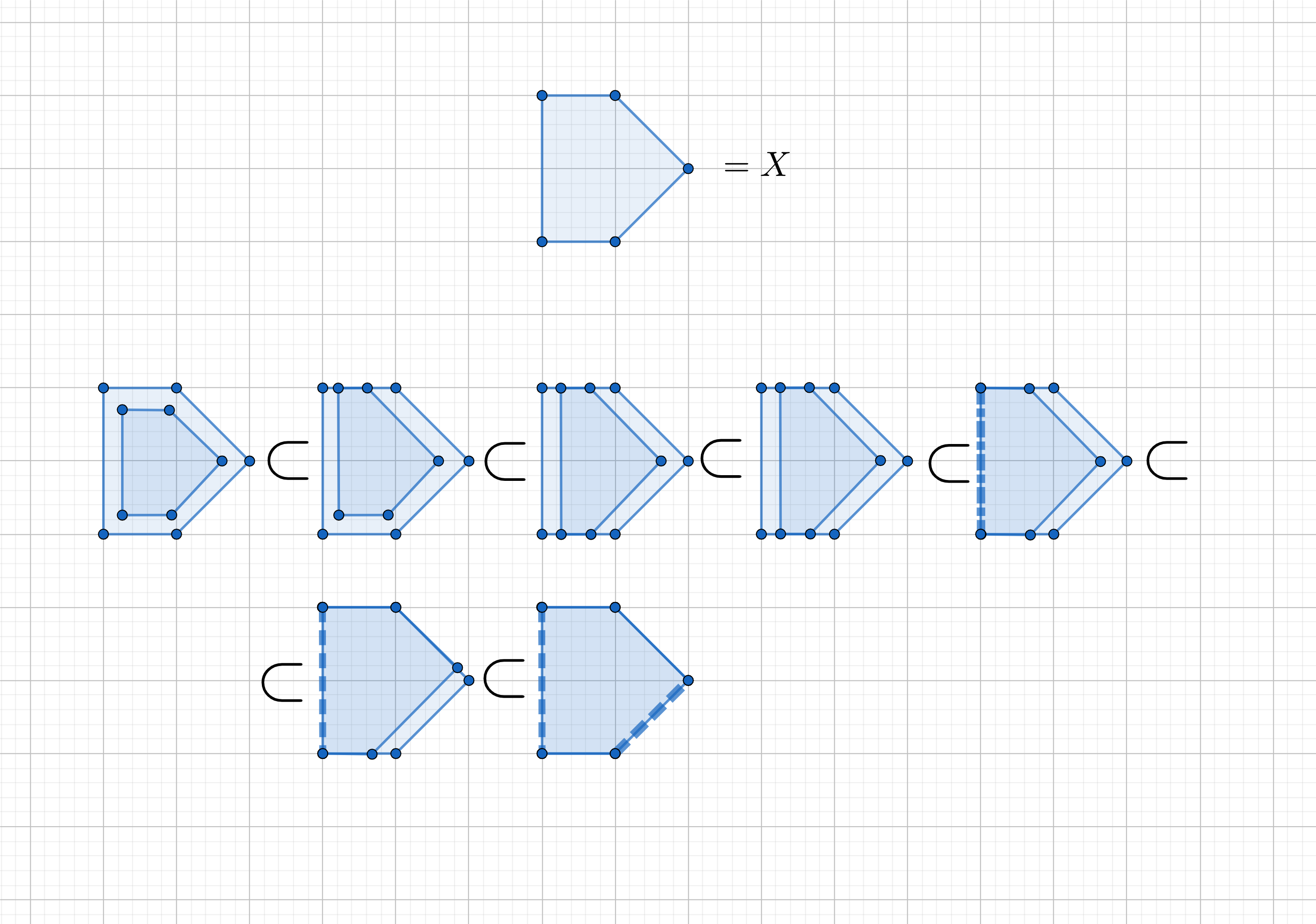}
  \caption{doted lines indicate steps at which (3) does not hold.}\label{fig3}
\end{figure}    
\end{exa}

We would like to show that the proper order always exists. 

\begin{utver}
    Given the data from the definition \ref{def3}, we can always find the order of deformation so that the property (3) holds along the chain.
\end{utver}
\begin{proof}
    It follows from \ref{utver_0} that $X^{i}_{k}\ominus X^{i}\neq\varnothing$ if and only if all facets conjugated to $X^{i}$ are already shifted. We can choose the order of deformations such that there is only one facet (obviously the final) for which $X^{i}_{k}\ominus X^{i}\neq\varnothing$. Consider the graph $\Gamma_X$ whose vertices are in correspondence with facets of $X$ and two vertices are connected by an edge if and only if the corresponding facets conjugate. We can locate vertices of $\Gamma_X$ in the corresponding vertices of dual polytope of $B$, for example. Now, we define an order $R_{\psi}$ on $\Gamma_X$ where $\psi\in\mathbb{R}^n$ a linear form. We will say that $v R_{\psi}u$ if and only if $\psi(v)\geq\psi(u)$. Let us pick $\psi$ such that all $\psi(v)$ are distinct for all $v\in\Gamma_X$. For this $\psi$ let us shift faces in order $R_\psi$. It is easy to see that a face of $v$ of $X$ is surrounded by shifted faces if and only if for any neighbors $u$ of the corresponding vertex $v$ we have $\psi(u)<\psi(v)$. But for given order it is possible only for the final vertex.
\end{proof}

\subsection{\bf Properties of Pontryagin difference related to normal chains and $\mathbb{Z}()$ operation.}\label{pc}

Just for simplicity of notation we give the following definition
\begin{defin}
    Let $X_*$ be a positive normal chain starting from $X$. We will call $\alpha\in\mathbb{R}^n$ direction of deformation in inclusion $X_m\subset X_{m+1}$ if $\alpha$ is perpendicular to the facet of deformation in this inclusion. 
\end{defin}

The goal of this section is to show that for a normal chain $X\subset X_*$ the following formula is satisfied$\colon$

$$
\mathbb{Z}((X_{m+1}\ominus X)\setminus(X_{m}\ominus X))=\mathbb{Z}((X_{m+1}\setminus X_{m})\ominus X(\alpha))
$$

\begin{lemma}\label{l10}
Let $X\subset X_*$ be a positive normal chain and $\alpha$ be the direction of deformation in the inclusion $X_m\subset X_{m+1}$. Then $(X_{m+1}\setminus X_m)\ominus X(\alpha)=(X_{m+1}\ominus X)\setminus(X_m\ominus X)$.   
\end{lemma}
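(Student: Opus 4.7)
My plan is to compare both sides via their half-space descriptions and bridge the remaining gap using the Minkowski decomposition $X_i = X + Y_i$ guaranteed by property~(6) of a normal chain. Throughout, I set $Y_i := X_i \ominus X$ and choose $\alpha$ to be the inward normal of the facet being shifted, so that $X(\alpha)$ is exactly the corresponding facet $F_i$ of $X$.

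I would first translate both sides into inequalities. Writing $X = \{x : a_j \cdot x \geq \beta_j,\ j = 1, \ldots, L\}$ and, by property~(4), $X_i = \{x : a_j \cdot x \geq \gamma_j^i\}$ with $\gamma_j^{m+1} = \gamma_j^m$ for $j \neq i$, a direct calculation gives $Y_i = \{b : a_j \cdot b \geq \gamma_j^i - \beta_j\}$, so the right-hand side is the half-open slab of $Y_{m+1}$ carved out by $a_i \cdot b < \gamma_i^m - \beta_i$. On the left-hand side, using that $a_i \cdot y = \beta_i$ is constant on $F_i$, the conditions $b + F_i \subset X_{m+1}$ and $b + F_i \cap X_m = \emptyset$ become
\[
a_j \cdot b \geq \gamma_j^{m+1} - \mu_{ij}, \qquad a_i \cdot b < \gamma_i^m - \beta_i,
\]
where $\mu_{ij} := \min_{y \in F_i} a_j \cdot y \geq \beta_j$, with equality exactly when $F_i$ shares at least a vertex with $F_j$.

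The inclusion $\supseteq$ is then immediate: given $b \in Y_{m+1} \setminus Y_m$, I have $b + F_i \subset b + X \subset X_{m+1}$, and the strict bound on $a_i \cdot b$ together with the constancy of $a_i$ on $F_i$ pushes all of $b + F_i$ outside the $a_i$-half-space of $X_m$. For the reverse inclusion the LHS- and RHS-constraints already agree for every $F_j$ meeting $F_i$, so the content of the lemma is to rule out the non-meeting case. For this I plan to establish the auxiliary factorization
\[
X_{m+1} \setminus X_m \;=\; F_i + (Y_{m+1} \setminus Y_m).
\]
The $\supseteq$-inclusion of the factorization is a one-line check on the $a_i$-coordinate. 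The $\subseteq$-inclusion amounts to showing that any Minkowski decomposition $z = x_0 + y_0$ of a slab point $z$, coming from $X_{m+1} = X + Y_{m+1}$, can be rearranged into one with $x_0 \in F_i$; here I would mimic the monotone-path argument in the proof of Lemma~\ref{Lemma2}, applying Lemma~\ref{l1} to slide $x_0$ toward $F_i$ along edges of $X$ while absorbing each step into $y_0$ and checking that both $y_0 \in Y_{m+1}$ and the slab-defining inequality $a_i \cdot y_0 < \gamma_i^m - \beta_i$ are preserved. Once the factorization is in hand, a short $a_i$-coordinate argument gives $(F_i + (Y_{m+1} \setminus Y_m)) \ominus F_i = Y_{m+1} \setminus Y_m$, completing the proof.

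The main obstacle I anticipate is the rearrangement step inside the factorization: an arbitrary decomposition $z = x_0 + y_0$ need not place $x_0$ on $F_i$, and there is no naive projection onto $F_i$ that keeps $x_0 \in X$ and the counterpart $y_0$ in $Y_{m+1}$ simultaneously. The normal-chain structure—particularly property~(6) holding uniformly along the chain, which forces $X_m$ and $X_{m+1}$ to be genuine Minkowski sums of $X$ with convex polytopes—is what makes the edge-by-edge adjustment terminate at a valid decomposition with $x_0 \in F_i$.
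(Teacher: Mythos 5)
Your proposal takes a genuinely different structural route from the paper's own proof. The paper argues both inclusions directly: for the harder inclusion $(X_{m+1}\setminus X_m)\ominus X(\alpha)\subset(X_{m+1}\ominus X)\setminus(X_m\ominus X)$, it cites Lemma~\ref{Lemma2} to claim that $x+F_i\subset X_{m+1}$ extends to $x+X\subset X_{m+1}$, and for the reverse inclusion it runs a brief contradiction argument on the $a_i$-coordinate. You instead route through the intermediate factorization $X_{m+1}\setminus X_m=F_i+(Y_{m+1}\setminus Y_m)$ and then apply $\ominus F_i$. Both roads ultimately lean on the monotone-path machinery of Lemmas~\ref{l1}--\ref{Lemma2}, so neither is self-contained at the key step; what your version buys is a cleaner, symmetric intermediate identity, at the cost of having to prove something slightly stronger than the lemma asks for (the factorization does imply the lemma, but not obviously conversely).

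Two things to tighten. First, the "short $a_i$-coordinate argument" deducing $(F_i+(Y_{m+1}\setminus Y_m))\ominus F_i=Y_{m+1}\setminus Y_m$ is not purely an $a_i$-coordinate fact: for a non-convex $S$ the identity $(F_i+S)\ominus F_i=S$ generally fails (e.g. $F_i=[0,1]$, $S=\{0,1\}$ in $\mathbb{R}$). What saves you is that the closure $\bar{S}=Y_{m+1}\cap\{a_i\cdot y\le\gamma_i^m-\beta_i\}$ is convex and compact, so $(F_i+\bar S)\ominus F_i=\bar S$ by the support-function identity for convex bodies; the $a_i$-coordinate computation then trims $\bar S$ down to $S$ because $F_i$ lies in the level set $a_i=\beta_i$. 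You should say this explicitly — as written, the claim reads as if the $a_i$-coordinate alone closes the gap for every facet normal $a_j$, which it does not. Second, the rearrangement step (sliding $x_0$ to $F_i$ while keeping $y_0\in Y_{m+1}$) is the real content of the lemma and you acknowledge it only as "mimic Lemma~\ref{Lemma2}." Since this is exactly where the normality/smallness hypothesis is consumed — for a large deformation the claim is simply false, as the supporting inequalities for non-adjacent facets $F_j$ are genuinely weaker after $\ominus F_i$ than after $\ominus X$ — the writeup would benefit from spelling out which conclusion of Lemma~\ref{Lemma2} (namely $V(\tau)-V_X\in X(\tau)\ominus X$ for each vertex) is invoked and how the path of edges is absorbed into $y_0$.
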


\begin{proof}
    \begin{enumerate}
        \item $\subset$ part$\colon$Let $x\in(X_{m+1}\setminus X_m)\ominus X(\alpha)$. By definition $x+X\not\subset X_m$, so $x\notin X_m\ominus X$. Since $X_{m+1}$ is an element of a normal chain, we deduce that $x$ can be extended to an element of $X_{m+1}\ominus X$ so $x\in(X_{m+1}\ominus X)\setminus(X_m\ominus X)$.
        \item $\supset$ part$\colon$ Let $x\in(X_{m+1}\ominus X)\setminus(X_m\ominus X)$. We can describe $X_m$ and $X_{m+1}$ by two sets of inequalities which vary by only one single inequality $x.\alpha\leq(\epsilon_0+js)\beta$ for $X_{m}$ and $x.\alpha\leq(\epsilon_0+js+s)\beta$ for $X_{m+1}$. From $x\in(X_{m+1}\ominus X)\setminus(X_m\ominus X)$ we know that $X$ satisfies all of the ineqaulities except the last one. Since $\alpha$ is perpendicular to $X(\alpha)$, we can only have a few possibilities$\colon$
        \begin{enumerate}
            \item $X(\alpha)\subset X_m$
            \item $X(\alpha)$ has no points in $X_m$ at all.
        \end{enumerate}
        In the first case $X\subset X_{m}$ which contradicts the condition $x\notin(X_m\ominus X)$. In the second case $X(\alpha)\subset X_{m+1}\setminus X_m$.
    \end{enumerate}
    
\end{proof}

To prove the main result of the next section we must make the final preparatory steps. In general, elements of $C$ will not be integral polytopes, but we are going to use them as supports for some polynomial equations. To do this we prove the following formula$\colon$

\begin{lemma}\label{l12}
   Let $X\subset\mathbb{Z}^n$ be a convex polytope and $X_m\subset X_{m+1}$ be like in the lemma \ref{l10}. Then $\mathbb{Z}((X_{m+1}\ominus X)\setminus (X_m\ominus X))=\mathbb{Z}(X_{m+1}\setminus X_m)\ominus X(\alpha)$.
\end{lemma}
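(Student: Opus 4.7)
The plan is to apply Lemma \ref{l10} and then verify that in the resulting identity the operation $\ominus X(\alpha)$ gives the same answer whether we test the real face against the continuous slab or test its integer points against the integer points of the slab. Note that $(X_{m+1}\ominus X)\setminus(X_m\ominus X)$ is already a subset of $\mathbb{Z}^n$ by the definition of $\ominus$, so the outer $\mathbb{Z}(\cdot)$ on the left-hand side of the lemma is tautological. Lemma \ref{l10} rewrites this set as
$$(X_{m+1}\setminus X_m)\ominus X(\alpha)\;=\;\{b\in\mathbb{Z}^n\colon b+X(\alpha)\subset X_{m+1}\setminus X_m\},$$
so the claim reduces to the equality of this set with
$$\mathbb{Z}(X_{m+1}\setminus X_m)\ominus X(\alpha)\;=\;\{b\in\mathbb{Z}^n\colon b+(X(\alpha)\cap\mathbb{Z}^n)\subset\mathbb{Z}(X_{m+1}\setminus X_m)\}.$$

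The inclusion $\subseteq$ is immediate: any integer point of $b+X(\alpha)$ automatically lies in the integer points of the slab, since $b\in\mathbb{Z}^n$. For the reverse inclusion, fix $b\in\mathbb{Z}^n$ with $b+(X(\alpha)\cap\mathbb{Z}^n)\subset\mathbb{Z}(X_{m+1}\setminus X_m)$. Refining the chain by Lemma \ref{gapl} if necessary, Lemma \ref{LR} lets us assume that $\mathbb{Z}(X_{m+1}\setminus X_m)$ is contained in a single hyperplane $H$ parallel to the facet being shifted from $X_m$ to $X_{m+1}$, equivalently perpendicular to $\alpha$. Because $X_m$ and $X_{m+1}$ differ only in that facet's position and $H$ lies strictly on the slab side of the original facet of $X_m$, one has $H\cap X_m=\emptyset$. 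Since $X$ is integral, its face $X(\alpha)$ is itself an integer polytope and therefore equals the convex hull of its integer points; translating by the integer vector $b$ gives
$$b+X(\alpha)\;=\;\conv\bigl(b+(X(\alpha)\cap\mathbb{Z}^n)\bigr)\;\subset\;\conv\bigl(\mathbb{Z}(X_{m+1}\setminus X_m)\bigr)\;\subset\;X_{m+1}\cap H,$$
where the last inclusion uses convexity of $X_{m+1}\cap H$. Combined with $H\cap X_m=\emptyset$ this yields $b+X(\alpha)\subset X_{m+1}\setminus X_m$, as required.

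The main obstacle is the hyperplane step: one must know both that $\mathbb{Z}(X_{m+1}\setminus X_m)$ lies in a single hyperplane perpendicular to $\alpha$ and that this hyperplane is disjoint from $X_m$. The first is precisely the refinement supplied by Lemma \ref{LR} (i.e.\ property~(7) of a normal chain, which can always be arranged), while the second follows because the slab $X_{m+1}\setminus X_m$ is cut off from $X_m$ by the unshifted position of the facet with normal $\alpha$, so any integer-value hyperplane parallel to that facet and lying in the slab automatically avoids $X_m$. After these geometric observations are in place, the rest is a formal convex-hull manipulation enabled by the integrality of $X$.
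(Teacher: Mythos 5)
Your proof is correct, and it takes a genuinely different route from the paper's. The paper reduces everything, after applying Lemma \ref{l10}, to a one-line commutation identity: for $Y\subset\mathbb{R}^n$ and a finite $B\subset\mathbb{Z}^n$ one has $\mathbb{Z}(Y\ominus B)=\mathbb{Z}(Y)\ominus B$, verified by directly unwinding both definitions. It then applies this with $Y=X_{m+1}\setminus X_m$ and $B=X(\alpha)$ and stops. (The paper states the commutation under the hypothesis that $Y$ is convex, but convexity is never used in either inclusion, which is fortunate because the slab is not convex.) What the paper glosses over is exactly the point you isolate: on one side the $\ominus X(\alpha)$ coming out of Lemma \ref{l10} is tested against the \emph{real} face $X(\alpha)$, while on the other side the natural reading of $\mathbb{Z}(\cdot)\ominus X(\alpha)$ is against the lattice points $X(\alpha)\cap\mathbb{Z}^n$; passing from the discrete test back to the continuous one inside a non-convex region is not automatic. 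Your argument supplies exactly the missing step: property~(7) of a normal chain (Lemma \ref{LR}) puts all the lattice points of the slab in a single hyperplane $H$ perpendicular to $\alpha$ and disjoint from $X_m$, $X_{m+1}\cap H$ is convex, and integrality of $X$ gives $X(\alpha)=\conv(X(\alpha)\cap\mathbb{Z}^n)$, so the convex-hull step lands inside $X_{m+1}\setminus X_m$. This buys rigor at the spot where the paper is terse; the paper's version buys brevity by treating both $\ominus$'s as acting on lattice-point sets throughout, in which case the general commutation is indeed all that's needed. One small remark: invoking Lemma \ref{gapl} to ``refine the chain'' is slightly awkward, since the pair $X_m\subset X_{m+1}$ is already fixed by hypothesis; you do not actually need to refine, because property~(7) of Definition \ref{def1} is already assumed, and the parallelism of $H$ to the shifted facet is then forced: the integer points of the facet $X(\alpha)$ affinely span a hyperplane parallel to $\alpha^\perp$, and once a translate of them sits inside $H$, the two hyperplanes must coincide.
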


\begin{proof}
    By lemma \ref{l10} we have $(X_{m+1}\setminus X_m)\ominus X(\alpha)=(X_{m+1}\ominus X)\setminus(X_m\ominus X)$. From this we tautologically obtain $$\mathbb{Z}((X_{m+1}\ominus X)\setminus(X_m\ominus X))=\mathbb{Z}((X_{m+1}\setminus X_m)\ominus X(\alpha))$$
We are going to show that $\mathbb{Z}((X_{m+1}\setminus X_m)\ominus X(\alpha))=\mathbb{Z}(X_{m+1}\setminus X_m)\ominus X(\alpha)$. Let $Y\subset\mathbb{R}^n$ be a convex subset and $B\subset\mathbb{Z}^n$ also be convex. Then $\mathbb{Z}(Y\ominus B)=\mathbb{Z}(Y)\ominus B$. Indeed$\colon$

\begin{enumerate}
    \item $\subset$ part$\colon$ Let $x\in\mathbb{Z}(Y\ominus B)$ then $x\in\mathbb{Z}^n,\ x+B\subset Y$. Assume that $x+B\not\subset\mathbb{Z}(Y)$. That means we have a vertex $y$ of $B$ such that $x+y\notin\mathbb{Z}(Y)$. But $x+y\in\mathbb{Z}^n$ and if $x+y\in Y$ then $x+y\in\mathbb{Z}(Y)$.
    \item $\supset$ part$\colon$ Let $x\in\mathbb{Z}(Y)\ominus B$ then $x+B\subset\mathbb{Z}(Y)\subset Y$ and $x\in\mathbb{Z}(Y\ominus B)$.
\end{enumerate}
Finally, we apply this formula to the case $Y=X_{m+1}\setminus X_m$ and $B=X(\alpha)$.
\end{proof}

Condition for $X$ to have integral vertices is important. See figure \ref{y8}.

\begin{figure}[h!]
  \centering
     \includegraphics[width=0.5\linewidth]{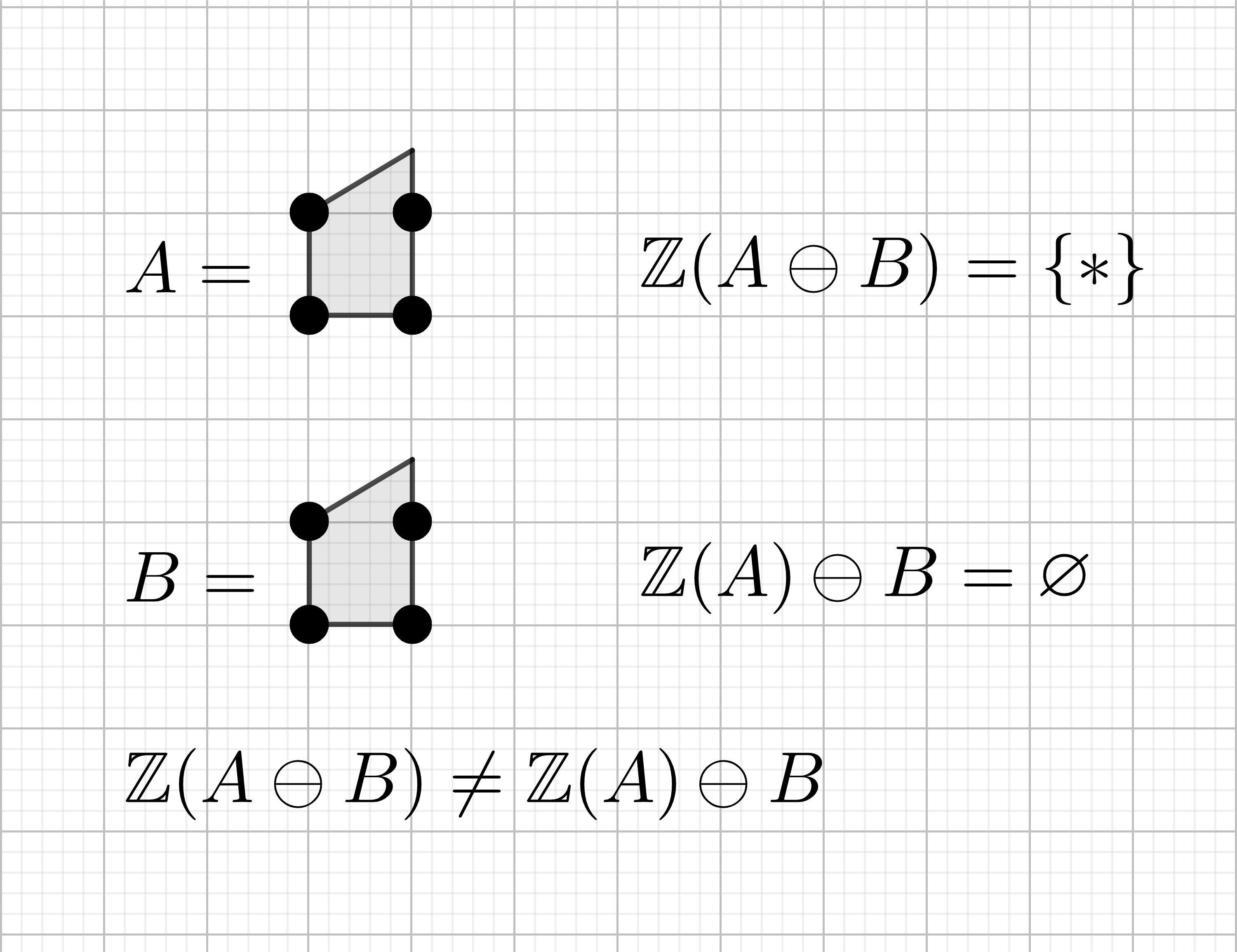}
    \caption{example that shows why $X$ must be integral}
\label{y8}
\end{figure}

\begin{lemma}\label{l11}
    Let $X\subset\mathbb{Z}^n$ be a convex polytope. Let $X\subset X_*$ be a positive normal chain. Then $\mathbb{Z}(X_{m+1}\ominus X)\setminus\mathbb{Z}(X_m\ominus X)=\mathbb{Z}((X_{m+1}\ominus X)\setminus (X_m\ominus X))$. Also, we have $\mathbb{Z}(X_m\ominus X)\subset\mathbb{Z}(X_{m+1}\ominus X)$.
\end{lemma}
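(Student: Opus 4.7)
The plan is to observe that both parts of the lemma reduce to near-tautologies, one from general set theory and one from monotonicity of the Minkowski difference, so the "content" of the statement is really just packaging an integer-point version of the previous lemma \ref{l12} in a form convenient for later use.

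For the second claim I would argue first. Because $X_m$ and $X_{m+1}$ are consecutive elements of a normal chain we have the inclusion $X_m \subset X_{m+1}$, and the map $Y \mapsto Y \ominus X$ is monotone with respect to inclusion: if $b+X \subset X_m$ then certainly $b+X \subset X_{m+1}$, hence $X_m \ominus X \subset X_{m+1} \ominus X$. Intersecting both sides with $\mathbb{Z}^n$ preserves the inclusion and yields $\mathbb{Z}(X_m \ominus X) \subset \mathbb{Z}(X_{m+1} \ominus X)$.

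For the first claim I would specialise the elementary set-theoretic identity $(A \cap Z) \setminus (B \cap Z) = (A \setminus B) \cap Z$ to $Z = \mathbb{Z}^n$, $A = X_{m+1} \ominus X$, $B = X_m \ominus X$. One verifies it by double inclusion: if $x \in \mathbb{Z}(A)\setminus\mathbb{Z}(B)$ then $x \in A$, $x \in \mathbb{Z}^n$, and $x \notin B \cap \mathbb{Z}^n$; but since $x \in \mathbb{Z}^n$ the last condition is equivalent to $x \notin B$, so $x \in (A\setminus B) \cap \mathbb{Z}^n = \mathbb{Z}(A\setminus B)$. Conversely, if $x \in \mathbb{Z}(A\setminus B)$ then $x \in A$, $x \notin B$, $x \in \mathbb{Z}^n$, whence $x \in \mathbb{Z}(A)$ and $x \notin \mathbb{Z}(B)$.

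There is no real obstacle here; the only thing worth emphasising is the role of this lemma in the chain of reasoning. Combined with lemma \ref{l12}, the first claim gives
\[
\mathbb{Z}(X_{m+1}\ominus X) \setminus \mathbb{Z}(X_m\ominus X) \;=\; \mathbb{Z}(X_{m+1}\setminus X_m) \ominus X(\alpha),
\]
so the new lattice points appearing when one passes from $X_m \ominus X$ to $X_{m+1} \ominus X$ are exactly the $X(\alpha)$-interior lattice points of the "slab" $X_{m+1}\setminus X_m$. This is the identity the later sections will use to build supports of polynomials step-by-step along a normal chain, while the inclusion in part 2 ensures that this construction is genuinely an increasing filtration at the level of integer points.
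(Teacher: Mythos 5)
Your proof is correct and takes essentially the same double-inclusion approach as the paper; you are also right that the first claim is a pure set-theoretic identity $(A\cap Z)\setminus(B\cap Z)=(A\setminus B)\cap Z$, which is actually a slight simplification of the paper's argument, since the paper detours through the identity $\mathbb{Z}(X_m\ominus X)=\mathbb{Z}(X_m)\ominus X$ from the preceding lemma even though it is not needed for the contradiction. The monotonicity argument for the second claim matches the paper's exactly.
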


\begin{proof}
\begin{enumerate}
    \item $\subset$ part$\colon$ Let $x\in\mathbb{Z}(X_{m+1}\ominus X)\setminus\mathbb{Z}(X_m\ominus X)\Rightarrow x\in X_{m+1}\ominus X,\ x\in\mathbb{Z}^n$. Assume that $x\in X_m\ominus X$, then $x\in\mathbb{Z}(X_m\ominus X)$ since $x$ is integral. This is not possible. We deduce that $x\in(X_{m+1}\ominus)\setminus(X_m\ominus X)$. Since $x\in\mathbb{Z}^n$ we conclude that $x\in\mathbb{Z}((X_{m+1}\ominus X)\setminus(X_m\ominus X))$.
    \item $\supset$ part$\colon$ Let $x\in\mathbb{Z}((X_{m+1}\ominus X)\setminus(X_m\ominus X))\Rightarrow x\in\mathbb{Z}^n,\ x+X\subset X_{m+1}, x+X\not\subset X_m$. Thus, $x\in\mathbb{Z}(X_{m+1}\ominus X)$. Assume that $x\in\mathbb{Z}(X_m\ominus X)=\mathbb{Z}(X_m)\ominus X$. Then $x+X\subset \mathbb{Z}(X_m)\subset X_m$. Contradiction.
\end{enumerate} 
For the second part, consider $x+X\subset X_m\subset X_{m+1}\Rightarrow X_m\ominus X\subset X_{m+1}\ominus X\Rightarrow\mathbb{Z}(X_m\ominus X)\subset\mathbb{Z}(X_{m+1}\ominus X)$.
\end{proof}

\section{\bf Proof of theorem \ref{THM}.}

\subsection{\bf Computing $\dim\mathcal{V}^{f}_{A}$ in dimension $n$ for $2$ copies of $B$.}\label{ga3}

We dedicate this subsection to the proof of theorem \ref{THM} for $3$ bodies $(A, B, B)$. We will use widely all the material we have collected in two previous purely technical sections.

\begin{theor}\label{THM_0}
\indent
Let $A, B$ be polytopes in $\mathbb{Z}^n$ such that the last one is not a segment. Let $A+v\subset(t+1)B$ for some non-negative real $t$ and $v\in\mathbb{Z}^n$. Then, for any r-general couple of polynomials $f_1,f_2$ supported at $B$ and any combination $g=c_1 f_1+c_2 f_2$ supported at $A$, we can represent $g$ in the form $g=s_1 f_1+s_2 f_2$ where all $s_i$ are supported at $\mathbb{Z}(tB)$.
\end{theor}

Let $C$ be a polytope. As in the example \ref{mex} we define the following map$\colon$
$$
\phi^C\colon\mathbb{C}^{C}\oplus\mathbb{C}^{C}\rightarrow\mathbb{C}[{\bf{x}}^{\pm1}]
$$
$$
(c_1, c_2)\mapsto c_1 f_1+c_2 f_2
$$

In the space $\mathbb{C}^{C}$ we define a linear subspace $W_A^C$ by the condition $\supp(c_1 f_1+c_2 f_2)\subset A$. In this notation we have the following formula which actually works for any number of bodies

\begin{equation}\label{eq_0}
\dim\mathcal{V}_A^{C, f}=\dim W_A^C-\dim\ker\phi^C.\quad 
\end{equation}

We will use notation 

$$
\delta\dim W^{C_i}_A=\dim W^{C_{i+1}}_A-\dim W^{C_{i}}_A
$$

Now, choose $C$ as in the theorem \ref{THM_0}. To show that $\mathcal{V}_A^{C, f}=\mathcal{V}_A^f$ we will apply our study of normal chains. 

First, we describe how both terms of \ref{eq_0} changes when we add extra points to $C$.

\begin{lemma}\label{Ker}
    In the above notation, $\dim\ker\phi^{C}=\vert\mathbb{Z}(C)\ominus B\vert$.
\end{lemma}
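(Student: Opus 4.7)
The plan is to combine two ingredients: a syzygy description of $\ker \phi^{C_i}$ arising from the regular-sequence hypothesis on $(f_1, f_2)$, and the Minkowski-difference description of $\{\lambda : \lambda f_j \in \mathbb{C}^{\mathbb{Z}(C_i)}\}$ already supplied by Proposition~\ref{prop1}.

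First, I would invoke condition (1) of genericity: $(f_1, f_2)$ is a regular sequence in the Laurent polynomial ring $\mathcal{L} = \mathbb{C}[{\bf x}^{\pm 1}]$. The Koszul complex of this sequence is then exact in the middle term, so the module of Laurent syzygies $\{(c_1, c_2) \in \mathcal{L}^2 : c_1 f_1 + c_2 f_2 = 0\}$ is freely generated by the trivial Koszul relation $(f_2, -f_1)$. Hence every such pair has the unique form $(c_1, c_2) = (\lambda f_2, -\lambda f_1)$ for some $\lambda \in \mathcal{L}$, and this identifies $\ker \phi^{C_i}$ with the space of $\lambda \in \mathcal{L}$ for which both $\lambda f_1$ and $\lambda f_2$ are supported in $\mathbb{Z}(C_i)$.

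Next, I would translate the support constraint on $\lambda$. By condition (3) of genericity each $f_j$ has Newton polytope exactly $B$ and is nonzero at every vertex of $B$. Combined with convexity of $C_i$ (guaranteed by the normal-chain construction of Section~\ref{NC}), this yields the equivalences
\[
\supp(\lambda f_j) \subset \mathbb{Z}(C_i) \iff \supp(\lambda) + B \subset C_i \iff \supp(\lambda) \subset \mathbb{Z}(C_i) \ominus B,
\]
which is exactly Proposition~\ref{prop1} applied to the single equation $f_j$. The two conditions for $j = 1$ and $j = 2$ produce the same constraint on $\lambda$, so the admissible $\lambda$ form the space $\mathbb{C}^{\mathbb{Z}(C_i) \ominus B}$. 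Finally, the map $\lambda \mapsto (\lambda f_2, -\lambda f_1)$ is injective because $\mathcal{L}$ is an integral domain and $f_1 \neq 0$, hence $\dim \ker \phi^{C_i} = |\mathbb{Z}(C_i) \ominus B|$.

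The only step that is not completely routine is the reduction to the single Koszul generator, which is where the regular-sequence hypothesis is used in an essential way; the rest is just an unwinding of definitions, relying on convexity of $C_i$ and on the fact that the vertices of $B$ belong to $\supp(f_j)$ for generic $f_j$.
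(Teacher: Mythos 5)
Your proof is correct and follows essentially the same route as the paper's: the paper's one-line argument also identifies $(c_1,c_2)=\mu(f_2,-f_1)$ via the trivial Koszul syzygy of the regular sequence and then reads off that $\mu\in\mathbb{C}^{\mathbb{Z}(C_i)\ominus B}$. You have simply spelled out the steps the paper leaves implicit, namely the use of condition (1) of genericity for the syzygy reduction and condition (3) together with convexity of $C_i$ (and the identity $\mathbb{Z}(C_i)\ominus B=\mathbb{Z}(C_i\ominus B)$ from Lemma~\ref{l12}) for the translation of the support constraint on $\mu$.
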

\begin{proof}
    For r-general $f=(f_1, f_2)$ from $c_1 f_1+c_2 f_2=0$ we obtain $(c_1,\ c_2)=\mu(f_2,\ -f_1)$ where $\mu\in\mathbb{C}^{\mathbb{Z}(C)\ominus B}$. 
\end{proof}

The next lemma shows us that actually $\dim W_A^{C}$ cannot change to fast along normal chains. That is why we use them.

\begin{lemma}\label{LC}
    If $C_{i}\subset C_{i+1}$ is an inclusion in normal chain such that $A\subset\mathbb{Z}(C_i+B)$ then 
    
    $$
    \delta\dim W^{C_i}_A\leq|\mathbb{Z}(\delta C_i)\ominus B(\alpha)|.
    $$
    where $\alpha$ is the direction of deformation in the inclusion $C_i\subset C_{i+1}$.
\end{lemma}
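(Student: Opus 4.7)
The plan is to adapt the matrix-variation argument of Section \ref{mex} to this general setting. Writing $V^C = \mathbb{C}^C \oplus \mathbb{C}^C$ and letting $\Omega^C$ denote the matrix encoding the linear conditions $\supp(c_1 f_1 + c_2 f_2) \subset A$ that cut out $W_A^C$, we have $\dim W_A^C = \dim V^C - \rk \Omega^C$, so it suffices to bound $\delta \dim V - \delta \rk \Omega$ from above. I would order the columns of $\Omega^{C_{i+1}}$ so that coefficients indexed by $\mathbb{Z}(C_i)$ precede those indexed by $\mathbb{Z}(\delta C_i) = \mathbb{Z}(C_{i+1}) \setminus \mathbb{Z}(C_i)$, and order the rows so that indices in $\mathbb{Z}(C_i+B) \setminus A$ precede indices in $\mathbb{Z}(C_{i+1}+B) \setminus \mathbb{Z}(C_i+B)$. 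Since no lattice point $p$ in the latter can be written as $q+b$ with $q \in \mathbb{Z}(C_i)$ and $b \in B$ (else $p \in \mathbb{Z}(C_i+B)$), the lower-left block of $\Omega^{C_{i+1}}$ vanishes, giving a block upper-triangular form
\[
\Omega^{C_{i+1}} = \begin{bmatrix} \Omega^{C_i} & \ast \\ 0 & \delta \Omega \end{bmatrix}, \qquad \delta \rk \Omega \geq \rk \delta \Omega.
\]
Combined with rank-nullity, this already yields $\delta \dim W_A^{C_i} \leq \dim \ker \delta \Omega$, and the remaining task is to compute that kernel.

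Next I would interpret $\delta \Omega$ concretely. By property (7) of the normal chain, together with Lemma \ref{LR}, the new lattice points $\mathbb{Z}(\delta C_i)$ all lie on a single affine hyperplane parallel to the facet of $C_i$ being shifted, whose outward normal is the direction $\alpha$. The analogous statement holds for the new row indices in $\mathbb{Z}(C_{i+1}+B) \setminus \mathbb{Z}(C_i+B)$, which lie on the corresponding translated hyperplane of the Minkowski sum. Consequently only the monomials of $f_j$ supported on the face $B(\alpha)$ can contribute from the new columns to the new rows, and $\delta \Omega$ is exactly the matrix of the linear map
\[
(\delta c_1, \delta c_2) \;\longmapsto\; \delta c_1 \cdot f_1|_{B(\alpha)} + \delta c_2 \cdot f_2|_{B(\alpha)}
\]
from $\mathbb{C}^{\mathbb{Z}(\delta C_i)} \oplus \mathbb{C}^{\mathbb{Z}(\delta C_i)}$ to the corresponding space on the translated hyperplane.

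Finally, I would describe $\ker \delta \Omega$ using syzygies. Genericity condition (2) ensures that $(f_1|_{B(\alpha)}, f_2|_{B(\alpha)})$ is a regular sequence in the Laurent ring associated to that affine hyperplane, so its module of first syzygies is generated by the Koszul relation $(f_2|_{B(\alpha)}, -f_1|_{B(\alpha)})$. Every element of $\ker \delta \Omega$ therefore has the form $\mu \cdot (f_2|_{B(\alpha)}, -f_1|_{B(\alpha)})$ for a Laurent polynomial $\mu$ satisfying $\supp \mu + B(\alpha) \subset \mathbb{Z}(\delta C_i)$, i.e.\ $\mu \in \mathbb{C}^{\mathbb{Z}(\delta C_i) \ominus B(\alpha)}$. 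This yields $\dim \ker \delta \Omega = |\mathbb{Z}(\delta C_i) \ominus B(\alpha)|$ and completes the estimate. The main technical obstacle I expect is the precise verification that the reduction to the face $B(\alpha)$ is correct — carefully matching row and column indices so that $\delta \Omega$ really is the matrix of the face-level Koszul map — together with confirming that genericity condition (2) yields a regular sequence for the pair restricted to the single hyperplane spanned by $B(\alpha)$, which is exactly the content of the ``for each facet $F$'' clause in the definition of generic.
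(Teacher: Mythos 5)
Your proposal is correct and follows essentially the same route as the paper's proof: the same block upper-triangular decomposition of the condition matrix $\Omega$, the same rank inequality $\delta\rk\Omega\geq\rk\delta\Omega$, the same reduction of $\delta\Omega$ to the single facet equation $\delta c_1\,f_1|_{B(\alpha)}+\delta c_2\,f_2|_{B(\alpha)}=0$ using property (7) of normal chains, and the same Koszul/syzygy computation of the kernel as $\mathbb{C}^{\mathbb{Z}(\delta C_i)\ominus B(\alpha)}$. You are somewhat more explicit than the paper about matching up row and column indices and about where the facet-genericity clause is invoked, but the underlying argument is identical.
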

\begin{proof}
    Let $W_A^{C_i}$ be defined by the system $\Omega_i$ of linear equations such that we enumerate columns of $\Omega_i$ by points of $\mathbb{Z}(C_i)$ and rows by linear conditions obtained from the property that $c_1 f_1+c_2 f_2$ has no monomials outside $A$. Thus $\dim W^{C_i}_A=2|\mathbb{Z}(C_i)|-\rk\Omega_i$. We use similar notation for $\Omega_{i+1}$. Under this agreement we can see that $\Omega_{i+1}$ has the following form$\colon$

\begin{equation}\label{eqM}
\Omega_{i+1}=\left[\begin{array}{c|c c} 
	\Omega_i & *\\ 
	\hline 
	0 & \delta\Omega
\end{array}\right]
\end{equation}
    where $\delta\Omega$ is the matrix in the intersection of new columns related to the points in $\mathbb{Z}(C_{i+1}\setminus C_i)$ and conditions given by new monomials in $(c_1+\delta c_1) f_1+(c_2+\delta c_2) f_2$. Since all new monomials $\mathbb{Z}(\delta C_i)=\mathbb{Z}(C_{i+1}\setminus C_i)$ lie in the common hyperplane we can group them up. After this we obtain a single algebraic equation$\colon$

$$
c_1\vert_{\mathbb{Z}(\delta C_i)} f_1\vert_{B(\alpha)}+c_2\vert_{\mathbb{Z}(\delta C_i)} f_2\vert_{B(\alpha)}=0
$$
    We deduce from it that 
    $$
    (\delta c_1, \delta c_2)=\lambda(f_2\vert_{B(\alpha)},\ -f_1\vert_{B(\alpha)})
    $$
    where $$\lambda\in\mathbb{C}^{\mathbb{Z}(\delta C_i)\ominus B(\alpha)}.$$
    By the dimension theorem from linear algebra we have $$2\vert\mathbb{Z}(\delta C_i)\vert-\rk\delta\Omega=\mathbb{Z}(\delta C_i)\ominus B(\alpha).$$ 

    By the inequality $\rk\Omega_{i+1}\geq\rk\Omega_{i}+\rk\delta\Omega$ we have $\delta\rk\Omega_i=\rk\Omega_{i+1}-\rk\Omega_{i}\geq\rk\delta\Omega$.
and finally we have 

$$
\delta\dim W^{C_i}_A=\delta(2|\mathbb{Z}(C_i)|-\rk\Omega_i)=2\vert\mathbb{Z}(\delta C_i)\vert-\delta\rk\Omega_i\leq2\vert\mathbb{Z}(\delta C_i)\vert-\rk\delta\Omega=$$ $$=|\mathbb{Z}(\delta C_i)\ominus B(\alpha)|.
$$
    
\end{proof}
Since we have just introduced $\Omega_i$ let us use it to prove that $\dim\mathcal{V}^f_A$ does not depend on the choice of $f$ for generic $f$. We will do it under the assumption of the theorem \ref{THM} that we will prove independently. 
\begin{utver}\label{Generic}
    Let $(A,B_1,\ldots,B_k)$ be a tuple of convex subsets from $\mathbb{Z}^n$ such that all $B_i$ are the same and $\dim B>k$. Then $\dim\mathcal{V}^f_A$ does not in general depend on the choice of $f\in\mathbb{C}^B$.
\end{utver}

\begin{proof}
    In this proof, given a square matrix $Q$ we will denote its rank by $\rk Q$ and its size by $\dim Q$. Let $\Omega_C(f)$ be the matrix of the system of linear equations imposed on $c_i$ and given by the conditions $\supp(c_1 f_1+\ldots+c_k f_k)\subset A$ where all $c_i$ are from $\mathbb{C}^C$. And let $\Omega_C^0(f)$ be the matrix of the linear system $c_1 f_1+\ldots+ c_k f_k=0$ under the same restrictions on $c_i$. Then we have the formula

$$\dim\mathcal{V}^{C,f}_A=(k|\mathbb{Z}(C)|-\rk\Omega_C(f))-(k|\mathbb{Z}(C)|-\rk\Omega_C^0(f)).$$

After the cancellation we have

$$
\dim\mathcal{V}^{C,f}_A=\rk\Omega_C^0(f)-\rk\Omega_C(f).
$$

For fixed $C$ let us consider $\Omega_C^0(f)$ and $\Omega_C(f)$. Let 

$$
m^0_1(f),\ m^0_2(f),\ldots,\ m_i^0(f),\ldots
$$

$$
m_1(f),\ m_2(f),\ldots,\ m_i(f),\ldots
$$

be sets of all minors of $\Omega_C^0(f),\ \Omega_C(f)$ in such order that $\dim m_i^0(f)\geq\dim m_j^0(f)$ and $\dim m_i(f)\geq\dim m_j(f)$ for any $i\geq j$. Let $I$ and $J$ be the smallest integers such that $\det m^0_I(f)$ and $\det m_J(f)$ are not identically zero. We will denote their dimensions by $M^0(C)$ and $M(C)$ since they depend only on the choice of $C$. Let $X, Y\subset\mathbb{C}^B$ be open subsets $X=\{\det m_I(f)\neq0\}$ and $Y=\{\det m_J(f)\neq0\}$. We can see that for all $f\in X\cap Y$ we have $\dim\mathcal{V}^{C,f}_A=\rk\Omega_C^0(f)-\rk\Omega_C(f)=\rk m^0_I(f)-\rk m_J(f)=M^0(C)-M(C)$ which does not depend on the choice of an element from the Zariskii open subset $X\cap Y$. Since we see that for almost all elements in $\mathbb{C}^B$ the value of $\mathcal{V}_A^{C,f}$ does not depend on $f$, let us denote it by $D(C)$, for short. We have the property that $C_1\subset C_2$ implies $D(C_2)\geq D(C_1)$ by the definition of $\mathcal{V}_A^{C,f}$ and by the fact that the intersection of two Zarisskii open subsets is again open. For given $C$ we will denote by $U(C)$ the open subset defined above which is characterized by the property that $f\in U(C)$ implies $\dim\mathcal{V}_A^{C,f}=D(C)$.

Since $D(C)\leq|A|$, we will have $D(C_m)=D(C)$ for some $C_m$ and for any $C$ such that $C_m\subset C$. From \ref{l21} we know that there exists some open dense subset $Y\subset\mathbb{C}^B$ such that $f\in Y$ are all r-general. From \ref{THM} it follows that there exists $C_0$ such that $\dim\mathcal{V}_A^{C_0,f}=\dim\mathcal{V}_A^f$. So for all $f\in Y\cap U(C_0)$ we have the same $\dim\mathcal{V}_A^{f}$.
\end{proof}
    We move on to our normal topic.
\begin{theor}\label{T}
    Let $A, B\subset\mathbb{Z}^n$ and $f=(f_1, f_2)$ be a r-general couple of Laurent polynomials whose Newton polytope is $B$. Let $C_*=\{C^{-}_*\subset C^{0}_*\subset B\subset C^{+}_*\}$ be a normal chain. Let $C_I$ be the least possible member of $C_*$ such that $A\subset\mathbb{Z}(C_I+B)$. Then $\dim\mathcal{V}_A^{C_i,f}=\dim\mathcal{V}_A^{C_I,f}=\dim\mathcal{V}_A^f$ for all $i\geq I$.
\end{theor}
Taking the least possible $t$ such that $A\subset C_I+B\subset(t+1)B$ we obtain the same result as in the theorem \ref{THM_0}
\begin{proof}
    Let $\alpha$ be the direction of deformation in the inclusion $C_i\subset C_{i+1}$. By lemma \ref{LC} we have

\begin{equation}\label{Van}
\delta D_i=\dim\mathcal{V}_A^{C_{i+1},f}-\dim\mathcal{V}_A^{C_i,f}\leq|\mathbb{Z}(\delta C_i)\ominus B(\alpha)|-\delta\dim\ker\phi^{C_i}.
\end{equation}
    
    \begin{enumerate}
        \item If $C_i\subset C_{i+1}$ is in the positive branch of $C_*$, 
by the formula obtained in lemma \ref{Ker} we have
$$\delta\dim\ker\phi^{C_i}=|\mathbb{Z}(C_{i+1})\ominus B|-|\mathbb{Z}(C_{i})\ominus B|=|\mathbb{Z}(C_{i+1}\ominus B)|-|\mathbb{Z}(C_{i}\ominus B)|.$$

The second equality is from \ref{l12}. If $|\mathbb{Z}(C_{i+1})|=|\mathbb{Z}(C_{i})|$ then we obviously have $\delta D_i=0$. Otherwise, 
$$|\mathbb{Z}(C_{i+1}\ominus B)|-|\mathbb{Z}(C_{i}\ominus B)|=|\mathbb{Z}((C_{i+1}\ominus B)\setminus (C_{i}\ominus B))|$$

by lemma \ref{l11}. By lemma \ref{l12} $$\mathbb{Z}((C_{i+1}\ominus B)\setminus (C_i\ominus B))=\mathbb{Z}(\delta C_i\ominus B(\alpha))=\mathbb{Z}(\delta C_i)\ominus B(\alpha).$$ We get $\delta\dim\ker\phi^{C_i}=|\mathbb{Z}(\delta C_i)\ominus B(\alpha)|$. And $0\leq\delta D_i\leq\vert\mathbb{Z}(\delta C_i)\ominus B(\alpha)\vert-\vert\mathbb{Z}(\delta C_i)\ominus B(\alpha)\vert=0$. We conclude that $\delta D_i=0$ for all $i$ in the normal chain starting from $I$.   

    \item If $C_i\subset C_{i+1}$ is in the negative branch of $C_*$ then both terms in \ref{Van} are zero.
    \item If $C_i\subset C_{i+1}$ are in the intermediate branch, then just like in the previous part, both terms are zero unless $C_{i+1}\neq B$. In this partial case when $C_{i+1}=B$ we have 

$$
\delta D_i\leq|\mathbb{Z}(\delta C_i)\ominus B(\alpha)|-\delta\dim\ker\phi^{C_i}=|\mathbb{Z}(B(\alpha)\ominus B(\alpha)|-(1-0)=1-1=0.
$$
    
\end{enumerate}

\end{proof}

We emphasize an important partial case, or modification, of theorem \ref{THM} when $A$ is an element of a normal chain itself, lemma \ref{LEMMA}. It will play crucial role in the proof of theorem \ref{THM} in complete generality. Namely, we can construct a normal $C_*$ chain so that $C_i\ominus B$ is another member of the same normal chain or empty.

\begin{lemma}\label{LEMMA}
    There exists a normal chain $C_*^-\subset C_*^0\subset B\subset C_*^+$ such that either
\begin{enumerate}
    \item $C_{jF+i}\ominus B=C_{(j-m)F+i}$ for some integer $m$ or 
    \item $\mathbb{Z}(C_{jF+i}\ominus B)\subset B$
\end{enumerate}
We will call such normal chains convenient.    
\end{lemma}
\begin{proof}
Let us represent $s$ in the form $s=1/N$ for some $N\in\mathbb{N}$ which we will define later, $s_k=s$ for all $k\in\{1,\ldots, i\}$ and 

$$
C_{jF+i}=B(\epsilon_0+js,s_1,\ldots,s_i,0\ldots,0)
$$

for positive integral $j$.\\

By T2.3 from \cite{G} we have that 

$$
C_{jF+i}\ominus B=\{\alpha_i.x\leq(\epsilon_0+js+s_k)\beta_k-h_B(\alpha_k)\}=$$

$$=\{\alpha_k.x\leq(\epsilon_0+js-1+s_k)\beta_k\}=B(\epsilon_0+js-1,s_1,\ldots,s_i,o,\ldots,0)
$$

Since $s=1/N$, $js-1=j/N-1=(j-N)/N=\bar{j}s$, and 

$$
B(\epsilon_0+js-1,s_1,\ldots,s_i,o,\ldots,0)=C_{(j-N)F+i}
$$

We can see that for $j\geq N$, $C_{jF+i}\ominus B$ is a member of the normal chain and $$\mathbb{Z}(C_{jF+i}\ominus B)=\mathbb{Z}(\{\alpha_k.x\leq(\epsilon_0+js-1+s_k)\beta_k\})\subset\mathbb{Z}(\epsilon_0)B=\mathbb{Z}(B).$$

when $js-1=(j-N)/N<0$. We can choose $N$ large enough so that $C_{jF+i}$ defined above indeed group up into a normal chain. That is possible by theorem \ref{TH}. 

\end{proof}

We end this section with another technical fact which complete the topic.

\begin{defin}
    Let $C\ominus^1 B=C\ominus B$, we put $C\ominus^k B=(C\ominus^{k-1} B)\ominus B$ for any natural $k$ and also for convenience we put $C\ominus^{-1}B=C+B$, $C\ominus^{0}B=C$.
\end{defin}

\begin{lemma}\label{LEM}
    Let $C_*^-\subset C_*^0\subset B\subset C_*^+$ be a convenient normal chain. Then for any $C\in C_*$ either
    \begin{enumerate}
        \item $C\ominus^k B\in C_*$ or
        \item $\mathbb{Z}(C\ominus^k B)\subset B$
    \end{enumerate}
\end{lemma}

\begin{proof}
    Since $C_*$ is convenient, by lemma \ref{LEMMA} we have that $C\ominus B$ is an element of the same normal chain or includes only those integral point which are already in $B$. If necessary, in both cases we can repeat the reasoning and finish the proof.
\end{proof}

\section{\bf Proof for $k>3$.}\label{MR}

In this final section we are going to use the partial case $k=3$ as a basis of induction. Let $A+v\subset\mathbb{Z}((1+t)B)$ for some real $t>1$ and $v\in\mathbb{Z}^n$ where $A, B\subset\mathbb{Z}^n$ are convex polytopes and $k\geq3$. We consider $k$ copies of $B$ in the tuple. We will use a convenient normal $C_*$ chain related to $B$. 

We give a brief reminder of the definition of Koszul complex and some of its properties. For a given ring (we will only use the multivariate ring $R=\mathcal{L}[{\bf{x}, \bf{x}^{-1}}]$ of Laurent polynomials, for simplicity) and a map of evaluation $ev\colon R^{k}\rightarrow R$, $e_i\mapsto f_i$, Koszul complex defined on this data is 

$$
0\rightarrow\bigwedge^k R^k\rightarrow\bigwedge^{k-1} R^k\rightarrow\ldots\rightarrow\bigwedge^{1} R^k\rightarrow R\rightarrow R/(f_1,\ldots, f_k)\rightarrow0
$$

where each differential is formally defined by the rule 

$$
x_1\wedge\ldots\wedge x_r\mapsto\sum_{i=1}^{r}(-1)^{i+1}ev(x_i) x_1\wedge\ldots\wedge\hat{x}_i\wedge\ldots\wedge x_r
$$

In different notation, we can represent Koszul complex in the form 

$$
0\rightarrow R\xrightarrow[]{\phi_0}R^{{k}\choose{1}}\xrightarrow[]{\phi_1}\ldots\xrightarrow[]{\phi_{r-1}}R^{{k}\choose{r}}\xrightarrow[]{\phi_{r}}\ldots\xrightarrow[]{\phi_{k-2}}R^{{k}\choose{k-1}}\xrightarrow[]{\phi_{k-1}}R\xrightarrow[]{\phi_{k}}R/(f_1,\ldots, f_k)
$$

The meaning of $\phi_k$ is just a factorisation by the ideal $(f_1,\ldots, f_k)$ and $\phi_{k-1}(s_1,\ldots, s_k)=s_1 f_1+\ldots+s_k f_k$.

We would like to replace each term by a suitable finite dimensional vector space, so that exactness of the complex would be preserved. That will allow us to estimate dimensions in the formula 

$$
\dim\mathcal{V}_A^{C, f}=\dim W_A^C-\dim\ker\phi^C.
$$

It is easy to verify the following property of $\phi_r\colon$

\begin{utver}\label{utver}
For $r<k-1$, in matrix notation, each row of $\phi_r$ consists of zeroes and $\pm f_i$ such that the total number of non-zero entries is smaller than $k$. In each row all non-zero entries correspond to different $f_i$.
\end{utver}
\begin{proof}
    We start from the second statement. Let $e_I=\bigwedge^r_{k=1} e_{i_k}$. If $\phi_r(e_I)$ has $f_l$ in the first row (wlog) we conclude that $e_I=\pm e_{i_l}\wedge\omega$, where $\omega$ is the first standard basis vector in $\bigwedge^{r-1} R^k$. If $f_l$ appears twice in the first row, that means $e_I=\pm e_J$ for two different index subsets $I$ and $J$, which is impossible. 

    Let us say, we can find all $\{f_i\}_{i=1}^k$ in the first row. That means, multiplying $\omega$ by any $e_l$ we would always obtain a nonzero tensor $e_l\wedge\omega$. That is only possible if $\deg\omega=0$ and thus $r=k-1$.
\end{proof}

Let $C_*$ be a convenient normal chain and $C\in C_*$. We define $\mathcal{L}_{k-i-1}=\mathbb{C}^{C\ominus^iB}$, $\mathcal{L}_{k-i-1}=\{0\}$ if $C\ominus^iB=\varnothing$ for $i\in\{-1,\ldots, k-1\}$ and $\psi_{k-i}=\phi_{k-i}\vert_{\mathcal{L}_{k-i}}$ for $i\in\{0,\ldots, k-1\}$. We will denote the following construction by $\mathcal{K}(B,C,f)$.
$$
0\xrightarrow[]{}\mathcal{L}_{0}\xrightarrow[]{\psi_0}\mathcal{L}_{1}^{{k}\choose{1}}\xrightarrow[]{\psi_1}\ldots\xrightarrow[]{\psi_{k-i-1}}\mathcal{L}_{k-i}^{{k}\choose{k-i}}\xrightarrow[]{\psi_{k-i}}\ldots\xrightarrow[]{\psi_{k-3}}\mathcal{L}^{{k}\choose{k-2}}_{k-2}\xrightarrow[]{\psi_{k-2}}\mathcal{L}^{{k}\choose{k-1}}_{k-1}\xrightarrow[]{\psi_{k-1}}\mathcal{L}^{{k}\choose{k}}_{k}\xrightarrow[]{}0
$$

\begin{theor}\label{THM2}
We claim that $\mathcal{K}(B,C,f)$ is a well defined chain complex. For r-general $(f_1,\ldots, f_k)$, $\mathcal{K}(B,C,f)$ is exact in each term except maybe in the $k$-th provided the statement of the theorem \ref{THM} is true for $k-1$ copies of $B$. 
\end{theor}

\begin{proof}
    \begin{enumerate}
        \item First, we show that the differentials are well defined. Let $s=\{s_j\}^{{k}\choose{k-i}}_{j=1}\in\mathcal{L}^{{k}\choose{k-i}}_{k-i}$, then for any $l\in\{1,\ldots,k\}$ we have $\supp (s_j f_l)\subset (C\ominus^{i+1}B)+B\subset C\ominus^{i}B\Rightarrow\psi_{k-i}(s)\in\mathcal{L}_{k-i+1}$.
        \item Chain property follows from the definition of $\phi_i$.
        \item Now, we show that for $s=\{s_j\}^{{k}\choose{k-i}}_{j=1}\in\mathcal{L}^{{k}\choose{k-i}}_{k-i}$ such that $s\in\ker\psi_{k-i}$, we can find a representation of $s$ as an image of some element from $\mathcal{L}_{k-i-1}$. Since $s\in\ker\psi_{k-i}$ we know that $s\in\ker\phi_{k-i}$ automatically. So, from the original Koszul complex we can find an element $s_0\in R^{{k}\choose{k-i-1}}$ such that $s=\phi_{k-i-1}(s_0)$. Using proposition \ref{utver} we can represent $s_i$ in the form$\colon$

$$
s_j=\sum_{r=1}^{k-1}c^j_rf_{r}
$$

Where $c^j_r$ are all from $R$. We can see that $\supp s_j\subset C\ominus^{i+1}B$ and thus by theorem \ref{THM} for $k-1$ copies of $B$ and by lemmata \ref{LEMMA}, \ref{T}, $c_r^j$ can be taken from $\mathcal{L}_{k-i-1}$.
        
    \end{enumerate}
\end{proof}

Now, consider the system of linear equations 

$$
c_1 f_1 + \ldots + c_k f_k = 0,\quad c_i\in\mathbb{C}^{tB}
$$
Since $(f_1,\ldots, f_k)$ is an ideal in the polynomial ring generated by a regular sequence, we can apply theorem \ref{THM2} to compute $\dim\im\psi_{k-2}(C)$ for given $C$ which is equal to the dimension of the set of solutions since $\mathcal{K}(B,C,f)$ is exact in this term. Consider another complex, which is exact in each term 

$$
0\xrightarrow[]{}\mathcal{L}_{0}\xrightarrow[]{\psi_0}\mathcal{L}_{1}^{{k}\choose{1}}\xrightarrow[]{\psi_1}\ldots\xrightarrow[]{\psi_{k-i-1}}\mathcal{L}_{k-i}^{{k}\choose{k-i}}\xrightarrow[]{\psi_{k-i}}\ldots\xrightarrow[]{\psi_{k-3}}\mathcal{L}^{{k}\choose{k-2}}_{k-2}\xrightarrow[]{\psi_{k-2}}\im\psi_{k-2}\xrightarrow[]{}0
$$

By exactness, its' Euler characteristics is zero, so we have 

$$
-\dim\im(\psi_{k-2})+\sum_{j=0}^{k-2}(-1)^{j}{{k}\choose{j}}\dim\mathcal{L}_{j}=0
$$

By the definition of $\mathcal{L}_j$ we can reformulate the last term

$$
-\dim\im(\psi_{k-2})+\sum_{j=0}^{k-2}(-1)^{j}{{k}\choose{j}}|\mathbb{Z}(C\ominus^{k-j-1} B)|=0
$$

In the end we have

\begin{equation}
    \dim\im(\psi_{k-2})=\sum_{j=0}^{k-2}(-1)^{j}{{k}\choose{j}}|\mathbb{Z}(C\ominus^{k-j-1} B)|.
\end{equation}

We emphasize that all computations were made under the assumption that we have already proven theorem \ref{THM} for the induction step $k-1$. Now we will expand all steps of the induction to prove \ref{THM} for arbitrary $k$.

Formula from the previois chapter

$$
\delta D_i=\dim\mathcal{V}_A^{f.C_{i+1}}-\dim\mathcal{V}_A^{f.C_i}=\delta\dim W_A^{C_i}-\delta\dim\ker\phi^{C_i}
$$
holds in any dimension. We want to show that $\delta D_i\leq0$ in each term of the normal chain. Let us observe the first term $\delta\dim W^{C_i}_A$.

\begin{equation}\label{eqL}
\Omega_{i+1}=\left[\begin{array}{c|c c} 
	\Omega_i & *\\ 
	\hline 
	0 & \delta\Omega
\end{array}\right]
\end{equation}

Augmented matrix for the linear system $\supp(c_1 f_1+\ldots+c_{k-1}f_{k-1})\subset A$ has the same structure as it had in the previous section, in the case $k=3$ (see \ref{eqL}). Thus, $$\delta\dim W^{C_i}_A=\delta((k-1)|\mathbb{Z}(C_i)|-\rk\Omega_i)=(k-1)|\mathbb{Z}(\delta C_i)|-\delta\rk\Omega_i\leq$$

$$
\leq(k-1)|\mathbb{Z}(\delta C_i)|-\rk\delta\Omega_i=\dim\im\psi_{k-2}(\delta C_i).
$$
Where $\delta C_i$ indicates that we consider Koszul complex related to $B(\alpha)$ and $\dim\im\psi_{k-2}(\delta C_i)$ is equal to the dimension of the space of solutions of the system 
\begin{equation}\label{eqL_0}
c_1\vert_{\mathbb{Z}(\delta C_i)} f_1\vert_{B(\alpha)}+\ldots+c_{k-1}\vert_{\mathbb{Z}(\delta C_i)} f_{k-1}\vert_{B(\alpha)}=0
\end{equation}

Now, $\dim\ker\phi^{C_i}=\dim\im\psi_{k-2}(C_i)$ and we have 

$$
\delta D_i\leq\dim\im(\psi_{k-2})(\delta C_i)-\delta(\dim\im(\psi_{k-2})(C_i))=
$$

$$
=\sum_{j=0}^{k-2}(-1)^{j}{{k}\choose{j}}|\mathbb{Z}(\delta C_i\ominus^{k-j-1} B(\alpha))|-\delta(\sum_{j=0}^{k-2}(-1)^{j}{{k}\choose{j}}|\mathbb{Z}(C_i\ominus^{k-j-1} B)|)=
$$

$$
=\sum_{j=0}^{k-2}(-1)^{j}{{k}\choose{j}}|\mathbb{Z}(\delta C_i\ominus^{k-j-1} B(\alpha))|-\sum_{j=0}^{k-2}(-1)^{j}{{k}\choose{j}}\delta|\mathbb{Z}(C_i\ominus^{k-j-1} B)|=
$$

$$
=\sum_{j=0}^{k-2}(-1)^{j}{{k}\choose{j}}(|\mathbb{Z}(\delta C_i\ominus^{k-j-1} B(\alpha))|-\delta|\mathbb{Z}(C_i\ominus^{k-j-1} B)|)=0
$$

Since each $C_i$ is an element of a normal chain and thus $\mathbb{Z}(\delta C_i\ominus^j B(\alpha))=\delta\mathbb{Z}(C_i\ominus^j B)$. By induction, the statement is proven.


\begin{thebibliography}{}
%
%

\bibitem{AJ}

Carlos D'Andrea, Gabriela Jeronimo. {\it Sparse Nullstellensatz, resultants and determinants of complexes}. \url{https://arxiv.org/abs/2407.13450v1}

\bibitem{B}

W. D. Brownawell. {\it Bounds for the degrees in the Nullstellensatz}, Ann. Math. 126 (1987), 577–591.

\bibitem{BN}

C. Borger and B. Nill. {\it On defectivity of families of full-dimensional point configurations}, Proc. Amer. Math. Soc. Ser. B 7, Volume, 43-51 (2020). \url{https://arxiv.org/abs/1801.07467}

\bibitem{BDD}
Betke, U., McMullen, P. Lattice points in lattice polytopes. Monatshefte für Mathematik 99, 253–265 (1985). https://doi.org/10.1007/BF01312545
\bibitem{BR}

D. N. Bernstein. {\it The number of roots of a system of equations}. Funct. Anal. Appl. 9
(1975), no. 3, 183–185.

\bibitem{BDF}

Frédéric Bihan, Alicia Dickenstein, Jens Forsgård. {\it Sparse systems with high local multiplicity}. \url{https://arxiv.org/abs/2402.08410}

\bibitem{CDV}

W. Castryck, J. Denef F. Vercauteren. {\it Computing zeta functions
of nondegenerate curves}, IMRP Int. Math. Res. Pap. 2006, Art. ID 72017,
57 pp. \url{https://arxiv.org/abs/math/0607308}

\bibitem{CE}

Canny, J., and I. Z. Emiris. {\it A subdivision-based algorithm for the sparse resultant} Journal
of the Association for Computing Machinery 47 (2000): 417–51.

\bibitem{DRM}

Dickenstein, Alicia, Sandra Di Rocco and Ralph Morrison. {\it Iterated and mixed discriminants}. (2021). \url{https://arxiv.org/abs/2101.11571}

\bibitem{DFS}

Dickenstein Alicia, Eva Maria Feichtner, Bernd Sturmfels {\it Tropical discriminants}, J. Amer. Math. Soc. 20 (2007), 1111-1133

\bibitem{DC}

 Alicia Dickenstein, Eduardo Cattani, {\it Non-splitting Flags, Iterated Circuits,
$\sigma$-Matrices and Cayley Configurations}, Vietnam J. Math. 50, 679–706 (2022).

\bibitem{E}

A. Esterov. {\it Galois theory for general systems of polynomial equations}. Compositio
Mathematica, 155(2):229–245, 2019. \url{https://arxiv.org/abs/1801.08260}

\bibitem{EMD}

A. Esterov. {\it Newton Polyhedra of Discriminants of Projections}. Discrete Comput Geom 44, 96–148 (2010). https://doi.org/10.1007/s00454-010-9242-7. \url{https://arxiv.org/abs/0810.4996}

\bibitem{E2}

A. Esterov. {\it Characteristic classes of affine varieties and Plucker formulas for affine morphisms}. J. Eur. Math. Soc. 20 (2018), no. 1, pp. 15–59

\bibitem{FI}

Katsuhisa Furukawa, Atsushi Ito.{\it A combinatorial description of dual defects of toric varieties}, Communications in Contemporary Mathematics 2021 23:01

\bibitem{G}

Ilya, K.; Gilbert, E.G. {\it Theory and computation of disturbance invariant sets for discrete-time linear systems}. Math. Probl. Eng. 1998, 4, 317–367.

\bibitem{GK}

A. Gabrielov and A. Khovanskii. {\it Multiplicity of a noetherian intersection}. Providence, RI: American Mathe-matical Society, pages 119–130, 1998

\bibitem{GKZ}

M. Kapranov I.Gelfand and A. Zelevinsky. {\it Discriminants, resultants and multidi-
mensional determinants}. 79(485):439–440, 1995.

\bibitem{J}

Jelonek, Z. {\it On the effective Nullstellensatz}. Invent. math. 162, 1–17 (2005). https://doi.org/10.1007/s00222-004-0434-8

\bibitem{K}

Kollar, J. {\it Sharp effective Nullstellensatz}. Journal of the American Mathematical Society 1 (1988): 963–75.


\bibitem{Kh}

A. Khovanskii. {\it Newton polytopes and irreducible components of complete intersections}.  Izv. Math. 80 263, 2016

\bibitem{AKNP}

A. Khovanskii. {\it Newton polyhedra and toroidal varieties}, Functional. Anal. i Prilozhen., 11:4 (1977), 56–64; Funct. Anal. Appl., 11:4 (1977), 289–296

\bibitem{MD}

Cattani, E., Cueto, M.A., Dickenstein, A. et al. {\it Mixed discriminants}. Math. Z. 274, 761–778 (2013). https://doi.org/10.1007/s00209-012-1095-8. \url{https://arxiv.org/abs/1112.1012}

\bibitem{MRS}
Miranda, Rick. {\it Algebraic Curves and Riemann Surfaces}. (1995).

\bibitem{N} 

I. Nikitin. {\it Bivariate systems of polynomial equations with roots of
high multiplicity.} // Journal of Algebra and Its Applications, (2021);
doi:10.1142/S0219498823500147. \url{https://arxiv.org/abs/1910.12541}

\bibitem{S}

M. Sombra. {\it A sparse effective Nullstellensatz}. Adv. in Appl. Math. 22 (1999), no. 2,
271–295. \url{https://arxiv.org/abs/alg-geom/9710003}

\bibitem{SS}

Sabia, J., Solernó, P. {\it Bounds for traces in complete intersections and degrees in the Nullstellensatz}. AAECC 6, 353–376 (1995). https://doi.org/10.1007/BF01198015

\bibitem{T}

Jan Tuitman. {\it A Refinement of a Mixed Sparse Effective Nullstellensatz}. International Mathematics Research Notices, Volume 2011, Issue 7, 2011, Pages 1560–1572, https://doi.org/10.1093/imrn/rnq127

\bibitem{W}

Wulcan, E. {\it Sparse effective membership problems via residue currents}. Math. Ann. 350, 661–682 (2011). https://doi.org/10.1007/s00208-010-0575-6. \url{https://arxiv.org/abs/0903.3618}

\bibitem{Weibel}

Weibel, C. (1994). {\it An Introduction to Homological Algebra} (Cambridge Studies in Advanced Mathematics). Cambridge: Cambridge University Press. doi:10.1017/CBO9781139644136

\end{thebibliography}
\end{document}